\newtheorem{theorem}{Theorem}[section]
\newtheorem{lemma}{Lemma}[section]
\newtheorem{conjecture}{Conjecture}[section]
\newtheorem{proposition}{Proposition}[section]
\theoremstyle{definition}
\newtheorem{definition}{Definition}[section]
\theoremstyle{remark}
\newcommand{\cH}{\mathcal{H}}
\newcommand{\cS}{\mathcal{S}}
\newcommand{\cA}{\mathcal{A}}
\newcommand{\cB}{\mathcal{B}}
\newcommand{\cF}{\mathcal{F}}
\newcommand{\cC}{\mathcal{C}}
\newcommand{\cE}{\mathcal{E}}
\newcommand{\cP}{\mathcal{P}}
\newcommand{\sF}{\mathscr{F}}
\DeclareMathOperator{\exco}{exco_2}
\DeclareMathOperator{\co}{co_2}
\tikzset{
vtx/.style={inner sep=1.1pt, outer sep=0pt, circle, fill,draw}, 
vtxl/.style={inner sep=1.1pt, outer sep=0pt, rectangle, fill=red,draw=black}, 
hyperedge/.style={fill=gray,opacity=0.3,draw=black}, 
}
\title{Some exact and asymptotic results for hypergraph Tur\'an problems in $\ell_2$-norm}
\author{George Brooks\thanks{University of South Carolina, Columbia, SC. ({\tt ghbrooks@email.sc.edu})}
\and
William Linz\thanks{University of South Carolina, Columbia, SC. ({\tt wlinz@mailbox.sc.edu}). Partially supported by NSF RTG Grant DMS 2038080.}}
\date{\today}
\begin{document}
\maketitle

\begin{abstract}
For a $k$-uniform hypergraph $\cH$, the \emph{codegree squared sum} $\co(\cH)$ is the square of the $\ell_2$-norm of the codegree vector of $\cH$, and for a family $\sF$ of $k$-uniform hypergraphs, the codegree squared extremal number $\exco(n, \sF)$ is the maximum codegree squared sum of a hypergraph on $n$ vertices which does not contain any hypergraph in $\sF$. Balogh, Clemen and Lidick\'y recently introduced the codegree squared extremal number and determined it for a number of $3$-uniform hypergraphs, including the complete graphs $K_4^3$ and $K_5^3$. 

In this paper, we give a number of exact or asymptotic results for hypergraph Tur\'an problems in the $\ell_2$-norm, including the first exact results for arbitrary $k$. Namely, we prove a version of the classical Erd\H{o}s-Ko-Rado theorem for the codegree squared extremal number: if $\cF \subset \binom{[n]}{k}$ is intersecting and $n\ge 2k$, then \[\co(\cF) \le \binom{n-1}{k-1}(1+(n-k+1)(k-1)),\]
with equality only for the star for $n > 2k$. Our main tool is an inequality of Bey, which also gives a general upper bound on $\exco(n, \sF)$. 

We also prove versions of the Erd\H{o}s Matching Conjecture and the $t$-intersecting Erd\H{o}s-Ko-Rado theorem for the codegree squared extremal number for large $n$, determine the exact codegree squared extremal number of minimal and linear $3$-paths and $3$-cycles, and determine asymptotically the codegree squared extremal number of minimal and linear $s$-paths and $s$-cycles for $s\ge 4$. 

Lastly, we derive a number of exact or asymptotic results for graph Tur\'an-type problems in the $\ell_2$-norm from spectral extremal results for certain forbidden subgraph problems and the well-known Hofmeister's inequality.   

\end{abstract}

\section{Introduction}
\subsection{Hypergraph Tur\'an problems in the \texorpdfstring{$\ell_2$}{}-norm}

A $k$-uniform hypergraph is a pair $\cH = (V, \cE)$, with a set of vertices $V$ and a set of edges $\cE$ such that for each $E\in \cE$, $E\subset V$ and $|E| = k$. We use the notation $[n] = \{1, 2, 3, \ldots, n\}$ for a positive integer $n$ and $\binom{V}{k}$ for the family of $k$-element subsets of $V$, so that $\cE \subset \binom{V}{k}$. If $|V| = n$, we typically replace $V$ by $[n]$. Given a family of $k$-uniform hypergraphs $\sF$, a $k$-uniform hypergraph $\cH$ is \emph{$\sF$-free} if it does not contain a copy of any member of $\sF$. 

Given a family $\sF$ of $k$-uniform hypergraphs, the \emph{extremal number} $\text{ex}(n, \sF)$ is the maximum number of edges in a $k$-uniform $\sF$-free hypergraph on $n$ vertices. The \emph{Tur\'an density} $\pi(\sF)$ is the scaled limit 
\[\pi(\sF) = \lim_{n\rightarrow \infty} \frac{\text{ex}(n, \sF)}{\binom{n}{k}}.\]

Problems about extremal numbers and Tur\'an densities are among the most central and well-studied of extremal combinatorics; see the survey of Keevash~\cite{Keev}. 

Recently, Balogh, Clemen and Lidick\'y~\cite{BCL1, BCL2} introduced a new type of extremal number for hypergraphs based on the $\ell_2$-norm of the codegree vector of a hypergraph.

\begin{definition}[Codegree vector and $\ell_2$-norm]\label{defn:cdvecl2}
Let $\cH \subset \binom{[n]}{k}$ be a $k$-uniform hypergraph on $[n]$. For a set $E\subset [n]$, the \textit{codegree} of $E$, $d(E)$, is the number of edges in $\cH$ containing $E$. The \textit{codegree vector} is the vector ${\bf x} \in \mathbb{Z}^{\binom{[n]}{k-1}}$ with entries given by ${\bf x}_{\{v_1, \ldots, v_{k-1}\}} = d(\{v_1, \ldots, v_{k-1}\})$ for all $(k-1)$-sets $\{v_1, \ldots, v_{k-1}\} \subset [n]$. The \emph{codegree squared sum} $\co(\cH)$ is the square of the $\ell_2$-norm of the codegree vector of $\cH$, \textit{i.e.}
\[\co(\cH) = \sum_{E\in \binom{[n]}{k-1}} d(E)^2.\]
\end{definition}

Note that for a $k$-uniform hypergraph $\cH$, $\sum_{E\in \binom{[n]}{k-1}}d(E) = k|\cH|$, so the $\ell_1$-norm of the codegree vector corresponds to the classical extremal number up to the constant $k$.

For a family $\sF$ of $k$-uniform hypergraphs, Balogh, Clemen and Lidick\'y define $\exco(n, \sF)$ to be the maximum codegree squared sum among all $k$-uniform $\sF$-free hypergraphs with $n$ vertices and further define the \emph{codegree squared density} $\sigma(\sF)$ to be the scaled limit of $\exco(\sF)$, so that
\[\sigma(\sF) =\lim_{n\rightarrow \infty}\frac{\exco(n, \sF)}{\binom{n}{k-1}{(n-k+1)^2}}.\]

\subsection{Kleitman-West problem}
The Kleitman-West problem is the following discrete edge isoperimetric problem: given positive integers $n > k > 0$ and a positive integer $0\le m\le \binom{n}{k}$, which hypergraph $\cF \subset \binom{[n]}{k}$ with $|\cF| = m$ maximizes the number of pairs $\{F, F'\}$ with $F, F'\in \cF$ and $|F\cap F'| = k-1$? Recall that the Johnson graph $J(n, k)$ is the graph with vertex set $\binom{[n]}{k}$ and edges $AB \Leftrightarrow |A\cap B| = k-1$. For a set $\cF \subset V(J(n, k))\cong \binom{[n]}{k}$, let $e_{J(n, k)}(\cF, \cF) = \{\{F, F'\}:\, F, F'\in \cF,\, FF'\in E(J(n, k))\}$ be the number of edges in the subgraph of the Johnson graph induced by the family $\cF$. The Kleitman-West problem is equivalent to: given $0\le m\le \binom{[n]}{k}$, what is the maximum size of $e_{J(n, k)}(\cF, \cF)$ over all $k$-uniform hypergraphs with $|\cF| = m$? 

The case $k=2$ was solved by Ahlswede and Katona~\cite{AK1978}, who proved that the maximum size families are either quasi-complete graphs or quasi-star graphs. Kleitman made a similar conjecture for larger $k$, that the first $m$ sets in either the lexicographic order or colexicographic order maximizes $e_{J(n, k)}(\cF, \cF)$. Ahlswede and Cai~\cite{AC1999} gave a counterexample to this conjecture for $k=3$. Gruslys, Letzter and Morrison~\cite{GLM} gave a smaller counterexample to the Kleitman-West question when $n=7$, $k=3$ and $m=11$. Namely, Gruslys, Letzter and Morrison take
\[\cH = \{123, 124, 125, 126, 127, 134, 135, 136, 145, 146, 156\} \subset \binom{[7]}{3}.\]
By computer, we found similar counterexamples when $n=9$, $k=4$ and $m=10$. Take
\[\cS = \{1234, 1236, 1238, 1239, 1346, 1348, 1349, 1368, 1369, 1389\} \subset \binom{[9]}{4}.\]

On the other hand, Das, Gan and Sudakov~\cite[Theorem 1.8]{DGS2016} solved the Kleitman-West problem in the case where $n$ is very large and the number of edges $m$ is very small or very large. Harper~\cite{Har91} solved the Kleitman-West problem for certain values of $m$ by using a continuous relaxation; see also \cite[Section 10.2]{Har04}. 

We show that hypergraph Tur\'an problems in the $\ell_2$-norm and the Kleitman-West problem are closely related. Let $\cH$ be a non-$k$-partite $k$-uniform hypergraph. Let $e(\cH, \cH)$ denote the maximum number of edges in an induced subgraph of $J(n, k)$ over all sets of vertices $V \subseteq V(J(n, k)) \cong \binom{[n]}{k}$ which do not contain a copy of $\cH$. The Tur\'an-type problem of maximizing the $\ell_2$-norm over all $k$-uniform hypergraphs on $n$ vertices which are $\cH$-free is asymptotically equivalent to finding $e(\cH, \cH)$. 

\begin{theorem}\label{thm:excokwthm}
Let $\cH$ be a non-$k$-partite $k$-uniform hypergraph. Then, 
\[\exco(n, \cH)= 2e(\cH, \cH) + O(n^k).\]
\end{theorem}

This theorem follows from a simple counting lemma (Lemma~\ref{lem:reform}) for the codegree squared sum, which is proved in Section 2. 

\subsection{Results}

A family of sets $\cF$ is \emph{$t$-intersecting} if for any two sets $F, F'\in \cF$, $|F\cap F'|\ge t$. When $t=1$, we say the family is \emph{intersecting}. The classic Erd\H{o}s-Ko-Rado theorem determines the maximum size of a $k$-uniform intersecting family. 

\begin{theorem}[Erd\H{o}s-Ko-Rado~\cite{EKR}]\label{ekrthm}
Let $\cF \subset \binom{[n]}{k}$ be an intersecting family with $n \ge 2k$. Then, 
\[|\cF| \le \binom{n-1}{k-1}.\]
If $n > 2k$, equality holds only if $\cF \cong \{F\in \binom{[n]}{k}: 1\in F\}$. 
\end{theorem}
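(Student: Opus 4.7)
My plan is to use Katona's cyclic permutation method for the size bound and a shifting argument for the equality characterization. The \emph{key lemma} of the cycle method is: for any cyclic ordering $\sigma$ of $[n]$, at most $k$ of the $n$ cyclic $k$-arcs of $\sigma$ can lie in $\cF$. If one such arc $A$ is in $\cF$, then any other $k$-arc meeting $A$ has starting offset in $\{\pm 1, \ldots, \pm(k-1)\}$ from $A$'s starting position; because $n \ge 2k$, the two arcs at offsets $+i$ and $-(k-i)$ are disjoint, so at most one of that pair can lie in the intersecting family $\cF$. Pairing off across $i \in \{1, \ldots, k-1\}$ yields at most $k-1$ further arcs beyond $A$, hence at most $k$ in total.

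I then double count pairs $(\pi, F)$ where $\pi$ is a linear permutation of $[n]$ (interpreted cyclically) and $F \in \cF$ appears as a cyclic $k$-arc of $\pi$. Each $F \in \cF$ arises from exactly $n \cdot k! \cdot (n-k)!$ permutations (choose a start position, order $F$, order $[n]\setminus F$), while each of the $n!$ permutations contributes at most $k$ pairs by the key lemma. Consequently
\[|\cF| \cdot n \cdot k! \cdot (n-k)! \;\le\; k \cdot n!,\]
which rearranges to $|\cF| \le \binom{n-1}{k-1}$.

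For the uniqueness claim when $n > 2k$, I would pass to a left-compressed family $\cF^*$ of the same size by iteratively applying the shift operations $S_{ij}$, which preserve both cardinality and the intersecting property (the latter by a standard exchange argument: if $S_{ij}(A)\cap S_{ij}(B)=\emptyset$ while $A\cap B\neq\emptyset$, one forces $S_{ij}(A)\in\cF$ already). Writing $\cF^* = \cF^*_1 \sqcup \cF^*_{\bar 1}$ by membership of the element $1$, the goal is to show $\cF^*_{\bar 1}=\emptyset$. Left-compression places $\{2, 3, \ldots, k+1\}$ in $\cF^*_{\bar 1}$ whenever $\cF^*_{\bar 1}\ne\emptyset$, and a careful structural analysis using the slack $n>2k$ (which provides disjoint $k$-subsets outside $\{2,\ldots,k+1\}$) then forces $|\cF^*|<\binom{n-1}{k-1}$, contradicting extremality. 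Hence $\cF^*$, and therefore $\cF$, is the star at a single point.

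The size bound drops out cleanly from the cycle method; I expect the main obstacle to be the uniqueness step. The strict inequality $n > 2k$ is essential: at $n=2k$ many non-star intersecting families saturate $\binom{2k-1}{k-1}$ (any collection obtained by choosing one from each complementary pair of $k$-subsets), so any uniqueness argument must genuinely invoke the strict hypothesis. Care is also needed to avoid relying on Hilton-Milner-type results, which would be circular; the cleanest path is a direct structural analysis of left-compressed extremal families rather than any appeal to second-largest bounds.
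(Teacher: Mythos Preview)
The paper does not give its own proof of this statement: Theorem~\ref{ekrthm} is quoted as the classical Erd\H{o}s--Ko--Rado theorem with a citation to~\cite{EKR} and is used as a black box (together with Bey's inequality) to deduce Theorem~\ref{fullekrl2}. So there is nothing in the paper to compare your argument against; you are supplying a proof where the paper supplies only a reference.

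On the merits of your sketch: the size bound via Katona's cycle method is correctly stated and the double count is clean. The uniqueness step, however, has a genuine gap beyond the ``careful structural analysis'' that you leave unspecified. Even granting that every left-compressed extremal family $\cF^*$ is the star at $1$, the inference ``hence $\cF^*$, and therefore $\cF$, is the star'' is not automatic: a single shift $S_{ij}$ can in principle turn a non-star into a star, and iterated shifts are not invertible in any structure-preserving way. To make this route work you must either (i) argue that $S_{ij}$ preserves the property ``not contained in any star'' for intersecting families of size $\binom{n-1}{k-1}$, or (ii) extract uniqueness directly from the cycle method by analyzing the equality case of the key lemma across all cyclic orderings (equality in the double count forces exactly $k$ arcs in $\cF$ for every cycle, which pins down the structure). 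Option~(ii) is self-contained and avoids the shifting subtlety entirely; option~(i) is doable but requires exactly the kind of case analysis you flagged as the main obstacle. As written, the uniqueness claim is asserted rather than proved.
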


One of the main theorems of this paper is a version of the Erd\H{o}s-Ko-Rado theorem in the $\ell_2$-norm.

\begin{theorem}[Erd\H{o}s-Ko-Rado in $\ell_2$-norm]\label{fullekrl2}
Let $\cF \subset \binom{[n]}{k}$ be an intersecting family with $n\ge 2k$. Then,

\[\co(\cF) \le \binom{n-1}{k-1}(1+(n-k+1)(k-1)),\]
with equality only if $\cF \cong \{F\subset \binom{[n]}{k} : 1\in F\}$ if $n > 2k$.  If $n=2k$, equality holds only for the $1$-star $\cF \cong \{F\subset \binom{[2k]}{k} : 1\in F\}$ and the complement of the $1$-star $\cF \cong\ \binom{[2k-1]}{k}$. 
\end{theorem}

Note in the classic Erd\H{o}s-Ko-Rado theorem, when $n=2k$ every maximal intersecting family is extremal, so there are $2^{\binom{2k-1}{k-1}}$ extremal families, while there are only $4k$ extremal families for the Erd\H{o}s-Ko-Rado theorem in the $\ell_2$-norm. 

Theorem~\ref{fullekrl2} turns out to be an immediate consequence of an inequality on the codegree squared sum of a hypergraph proven by Bey~\cite{Bey}, which extends an inequality for graphs due to de Caen~\cite{dC1998}. In fact, as we shall observe, Bey's inequality immediately shows that any Tur\'an-type problem in which the $1$-star is the extremal construction in the $\ell_1$-norm also has the $1$-star as the extremal construction in the $\ell_2$-norm. We also give several other applications of Bey's inequality to Tur\'an-type problems in the $\ell_2$-norm. We give Bey's inequality in Section 2 and note that it is a consequence of the expander mixing lemma.

The \textit{matching number} $\nu(\cF)$ of a $k$-uniform hypergraph $\cF$ is the maximum number of pairwise disjoint sets in $\cF$. The $k$-uniform hypergraphs $\cF$ with $\nu(\cF) = 1$ are the intersecting families. Erd\H{o}s~\cite{E1965} proved that for $n$ sufficiently large, the maximum size of a $k$-uniform hypergraph $\cF$ on vertices with matching number $\nu(\cF) = s$ is the following family $\cB(n, k, s)$:

\[\cB(n, k, s):= \Bigg\{ F\in \binom{[n]}{k}: F\cap [s]\neq \emptyset \Bigg\}.\]

Erd\H{o}s also made a conjecture for the maximum-size $k$-uniform family with matching number $s$ for all $n \ge ks$. This is now known as the Erd\H{o}s Matching Conjecture, and remains unproven for the full range of $n$.  Even the full range in which $\cB(n, k, s)$ is the maximum-size $k$-uniform hypergraph with matching number $s$ has not been determined. The best known bounds on this range are:  $n\ge (2s+1)k-s$ for all $s$ by Frankl~\cite{F2013}; and $n\ge \frac53sk - \frac23s$ for sufficiently large $s$ by Frankl and Kupavskii~\cite{FK2022}.

For sufficiently large $n$, we show that $\cB(n, k, s)$ also has maximum $\ell_2$-norm among all $k$-uniform hypergraphs with matching number $s$. The proof is given in Section 4.  

\begin{theorem}[Erd\H{o}s Matching Conjecture in $\ell_2$-norm]\label{thm:emcl2}
Let $\cF\subset \binom{[n]}{k}$ be a family of $k$-sets with $\nu(\cF)\le s$. There exists an integer $n_0(k, s)$ such that for $n \ge n_0(k, s)$, 
\[\co(\cF) \le \co(\cB(n, k, s)),\]
with equality if and only if $\cF \cong \cB(n, k, s)$.
\end{theorem}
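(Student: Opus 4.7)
\emph{Proof plan.} I plan to combine monotonicity of $\co$ with a structural reduction to $\cB(n,k,s)$ based on Erd\H{o}s's matching theorem.

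First, since $d_\cF(E)$ is monotone non-decreasing when edges are added to $\cF$, so is $\co(\cF)=\sum_{E\in\binom{[n]}{k-1}} d_\cF(E)^2$. Hence we may restrict to $\cF$ that is edge-maximal subject to $\nu(\cF)\le s$. The key structural goal is then to show that every $\co$-extremal such $\cF$ admits a vertex cover of size $s$; granting this, after identifying the cover with $[s]$ we have $\cF\subseteq \cB(n,k,s)$, and strict monotonicity of $\co$ combined with edge-maximality force $\cF=\cB(n,k,s)$.

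To establish the $s$-cover property, I would argue by contradiction. Suppose $\cF$ has no vertex cover of size $s$. The plan is to invoke a Hilton--Milner-type stability version of Erd\H{o}s's matching theorem: the natural second-extremal family (for example, $\cB(n,k,s-1)$ together with one extra edge disjoint from $[s-1]$) has size $|\cB(n,k,s)|-\binom{n-s}{k-1}+1$, and any $\cF$ with $\nu(\cF)\le s$ failing to admit an $s$-cover satisfies $|\cF|\le |\cB(n,k,s)|-\Theta(n^{k-1})$ for $n$ large. Combining this quantitative deficit in $|\cF|$ with Bey's inequality---which yields an upper bound on $\co(\cF)$ essentially linear in $|\cF|$ and sharp for star-like families---produces $\co(\cF)<\co(\cB(n,k,s))$ for $n\ge n_0(k,s)$, contradicting extremality.

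The main obstacle is matching the Bey-type upper bound on $\co(\cF)$ against the deficit in $|\cF|$. The crude bound $\co(\cF)\le k(n-k+1)|\cF|$ overshoots $\co(\cB(n,k,s))\sim s n^k/(k-2)!$ by a factor of roughly $k/(k-1)$, so for $k$ small relative to $s$ the naive comparison fails and one must invoke Bey's inequality to sharpen the upper bound to the tight form, approximately $(k-1)(n-k+1)|\cF|+O(|\cF|)$, which matches the tightness exhibited by stars and allows the stability deficit to dominate. Uniqueness in the theorem then follows from strict monotonicity of $\co$ together with the uniqueness clause in Erd\H{o}s's matching theorem, applied to the edge-maximal $\cF\subseteq \cB(n,k,s)$.
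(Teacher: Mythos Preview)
Your plan is correct and matches the paper's approach essentially line for line: split into the case $\cF\subseteq\cB(n,k,s)$ (handled by monotonicity of $\co$) versus $\tau(\cF)\ge s+1$, and in the latter case feed a stability deficit of order $n^{k-1}$ into Bey's inequality to beat $\co(\cB(n,k,s))\sim sk(k-1)\binom{n}{k}$. The paper cites the Frankl--Kupavskii stability theorem for precisely the deficit you describe, and your identification of Bey's inequality (rather than the crude bound $k(n-k+1)|\cF|$) as the tool needed to make the comparison go through is exactly the point.
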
 

There are several notions of paths and cycles in hypergraphs.  A \emph{Berge $s$-cycle} is a $k$-uniform hypergraph with $s$ edges $E_1, \ldots, E_s$  such that there are $s$ distinct vertices $v_1, \ldots, v_s$ with $v_{i} \in E_i \cap E_{i+1}$ for $1\le i\le s-1$ and $v_{s} \in E_s\cap E_1$. A \emph{minimal $s$-cycle} is a Berge $s$-cycle with edges $E_1, \ldots, E_s$ such that $E_i \cap E_j \neq\emptyset$ if and only if $|j-i|=1 \text{ or } \{i, j\} = \{1, s\}$ and no vertex belongs to each edge $E_1, \ldots, E_s$. The \emph{linear $s$-cycle} is the $k$-uniform hypergraph with $s$ edges $E_1, \ldots, E_s$ such that \[|E_i \cap E_j| = \begin{cases}1 & \text{if } |j-i|=1 \text{ or } \{i, j\} = \{1, s\} \\ 0 &\text{otherwise.}\end{cases}\] 

We denote the family of all minimal $k$-uniform $s$-cycles by $\cC_s^k$ and the linear $k$-uniform $s$-cycle by $C_s^k$. A $k$-uniform \emph{Berge $s$-path} (\emph{minimal $s$-path}, \emph{linear $s$-path}, respectively) is a hypergraph obtained from a $k$-uniform Berge (minimal, linear, respectively) $s$-cycle by deleting one of the edges. We denote the family of all minimal $k$-uniform $s$-paths by $\cP_s^k$ and the linear $k$-uniform $s$-path by $P_s^k$. 

We can determine the codegree squared extremal number exactly for minimal $3$-paths and $3$-cycles $\cP_3^k$ and $\cC_3^k$ and also for the linear $3$-path and the linear $3$-cycle. 

\begin{theorem}\label{thm:3cyc}
Let $k \ge 4$ be an integer. Then, for sufficiently large $n$, the codegree squared extremal number of the linear path $P_3^k$ satisfies
\[\exco(n, P_3^k) = \binom{n-1}{k-1}(1+(n-k+1)(k-1)).\]
 For $k\ge 3$, and for sufficiently large $n$, the codegree squared extremal number of  the linear cycle $C_3^k$ satisfies
 \[ \exco(n, C_3^k) = \binom{n-1}{k-1}(1+(n-k+1)(k-1)).\]
Similarly, for any $k\ge 3$ and for $n \ge 2k$ the  codegree squared extremal number of the family of minimal $k$-paths $\cP_3^k$ is given by
\[\exco(n, \cP_3^k) = \binom{n-1}{k-1}(1+(n-k+1)(k-1)),\]  
and for any $k\ge 3$ and $n \ge \frac{3k}{2}$, the codegree squared extremal number of the family of minimal $k$-cycles $\cC_3^k$ is given by 
\[\exco(n, \cC_3^k) = \binom{n-1}{k-1}(1+(n-k+1)(k-1)).\]
In all cases, equality holds only for the hypergraph $\cH \cong \{F\subset \binom{[n]}{k}: 1\in F\}$.
\end{theorem}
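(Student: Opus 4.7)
The plan is to verify that the star $\cH^* = \{F \in \binom{[n]}{k} : 1 \in F\}$ is $\cF$-free, attains the claimed codegree squared sum, and is the unique extremizer in each of the four cases. The $\cF$-freeness of $\cH^*$ is immediate: every pair of edges of $\cH^*$ contains the vertex $1$, so no two edges are disjoint (ruling out both the linear and the minimal $3$-paths, each of which contains a pair of disjoint edges) and every three edges share vertex $1$ (incompatible with the minimal $3$-cycle, which requires no common vertex among its edges, and with the linear $3$-cycle, whose three pairwise-intersection vertices must be distinct). Splitting the sum for $\co(\cH^*)$ according to whether the $(k-1)$-set contains $1$ yields $\binom{n-1}{k-2}(n-k+1)^2 + \binom{n-1}{k-1} = \binom{n-1}{k-1}(1+(n-k+1)(k-1))$, as required.

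For the upper bound in the linear cases $P_3^k$ and $C_3^k$, I would combine the classical Tur\'an-type theorems (F\"uredi--Jiang--Seiver for linear paths with $k \geq 4$, and F\"uredi--Jiang for linear cycles with $k \geq 3$), each giving $|\cH| \leq \binom{n-1}{k-1}$ for $n$ large with the star as the unique extremizer, with Bey's inequality (the tool behind Theorem~\ref{fullekrl2}). Bey's inequality bounds $\co(\cH)$ as a function of $|\cH|$, $n$, $k$ that is monotone increasing in $|\cH|$ and tight precisely at stars, so substituting the Tur\'an bound produces exactly $\binom{n-1}{k-1}(1+(n-k+1)(k-1))$, with equality forcing $\cH$ to be the star. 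For the minimal-path case $\cP_3^k$, I would instead use a structural decomposition: the intersection graph of a $\cP_3^k$-free hypergraph $\cH$ (vertices = edges of $\cH$, adjacencies = nonempty intersection) contains no induced $P_3$, hence is a disjoint union of cliques; this decomposes $\cH$ into pairwise-intersecting subfamilies $\cH_1, \ldots, \cH_m$ supported on pairwise-disjoint vertex sets $V_1, \ldots, V_m$. Since $\co(\cH) = \sum_i \co(\cH_i)$, applying Theorem~\ref{fullekrl2} to each cluster and using strict super-additivity of the function $v \mapsto \binom{v-1}{k-1}(1+(v-k+1)(k-1))$ shows that a single cluster spanning all $n$ vertices is optimal, reducing the problem to Theorem~\ref{fullekrl2} on an intersecting family.

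The main obstacle will be the $\cC_3^k$-free case, where the forbidden structure is a triple of pairwise intersecting edges without a common vertex, so the clean ``intersection graph is induced-$P_3$-free'' decomposition used for $\cP_3^k$ does not apply. In its place I anticipate needing a Helly-type structural argument: every triple of pairwise intersecting edges of $\cH$ has a common vertex, which together with $n \geq 3k/2$ should force most edges of $\cH$ to share a single common vertex, after which the argument proceeds as in the $\cP_3^k$ case. A secondary obstacle in the minimal-path decomposition is handling small clusters with $|V_i| < 2k$, for which Theorem~\ref{fullekrl2} does not apply directly; a crude bound on $\co(\cH_i)$ via the complete $k$-uniform hypergraph on $V_i$, followed by a short calculation, should show such clusters are strictly suboptimal in the tight range $n \geq 2k$. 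Uniqueness in all cases then follows from the uniqueness clause of Theorem~\ref{fullekrl2} for $n > 2k$ combined with the strictness of the super-additivity.
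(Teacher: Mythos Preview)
Your treatment of the linear cases $P_3^k$ and $C_3^k$ matches the paper's approach exactly: invoke the known Tur\'an results giving $\text{ex}\le\binom{n-1}{k-1}$ with the star as unique extremizer, then feed that edge bound into Bey's inequality (Theorem~\ref{thm:bey}) as in the proof of Theorem~\ref{fullekrl2}. One minor citation slip: the exact result for $C_3^k$ is due to Frankl--F\"uredi (and Cs\'ak\'any--Kahn for $k=3$), not F\"uredi--Jiang. Also, the phrase ``tight precisely at stars'' overstates what is needed; uniqueness comes from the strict monotonicity of Bey's bound in $|\cH|$ together with the uniqueness clause of the Tur\'an theorem, not from equality cases of Bey's inequality itself.

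Where you diverge from the paper is in the two minimal cases. The paper does \emph{not} use a structural decomposition: it simply cites the Mubayi--Verstra\"ete theorems (Theorem~\ref{thm:min3paths3cycles} here), which already give $\text{ex}(n,\cP_3^k)=\binom{n-1}{k-1}$ for $n\ge 2k$ and $\text{ex}(n,\cC_3^k)=\binom{n-1}{k-1}$ for $n\ge 3k/2$, each with the star as the unique extremal family. With those in hand, the argument is identical to the linear cases and to the proof of Theorem~\ref{fullekrl2}; there is no separate ``main obstacle'' for $\cC_3^k$.

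Your cluster decomposition for $\cP_3^k$ is a genuinely different route. It is correct that the intersection graph of a $\cP_3^k$-free system has no induced $P_3$ and hence splits into intersecting blocks on disjoint vertex sets, and the super-additivity idea is sound in principle. But the small-cluster issue is real: for $|V_i|<2k$ the whole of $\binom{V_i}{k}$ is intersecting and has larger $\co$ than the star on $V_i$, so you cannot simply bound each block by the star value; a nontrivial comparison is required to close the gap in the tight range $n\ge 2k$. For $\cC_3^k$ your plan is only a heuristic (``Helly-type \ldots should force most edges to share a vertex''), and that step is essentially reproving the Mubayi--Verstra\"ete theorem. All of this extra work disappears once you quote Theorem~\ref{thm:min3paths3cycles} directly.
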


The case $\exco(n, C_3^3)$ (where $n\ge 6$ suffices) was previously proved by Balogh, Clemen and Lidick\'y~\cite{BCL1}. 

For $s\ge 4$, we can obtain the leading order term of the codegree squared extremal number. 

\begin{theorem}\label{thm:kcyc}
Let $s$ and $k$ be integers, with $s\ge 4$ and $k\ge 3$. Then, the codegree squared extremal numbers of the linear path $P_s^k$ and the linear cycle $C_s^k$ satisfy
\[\exco(n, P_s^k) = \Big\lfloor{\frac{s-1}{2}\Big\rfloor}k(k-1)\binom{n}{k}(1+o(1)),\]
\[ \exco(n, C_s^k) = \Big\lfloor{\frac{s-1}{2}\Big\rfloor}k(k-1)\binom{n}{k}(1+o(1)).\]
Similarly, the extremal codegree squared numbers of the family of minimal $k$-paths $\cP_s^k$ and the family of minimal $k$-cycles $\cC_s^k$ satisfy 
\[\exco(n, \cP_s^k) =  \Big\lfloor{\frac{s-1}{2}\Big\rfloor}k(k-1)\binom{n}{k}(1+o(1)), \]
and 
\[\exco(n, \cC_s^k) = \Big\lfloor{\frac{s-1}{2}\Big\rfloor}k(k-1)\binom{n}{k}(1+o(1)). \]
\end{theorem}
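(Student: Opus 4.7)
\emph{Lower bound.} Set $t := \lfloor (s-1)/2 \rfloor$ and consider
\[\cH_t := \{F \in \textstyle\binom{[n]}{k} : F \cap [t] \neq \emptyset\}.\]
The plan is to show that $\cH_t$ is $\cC_s^k$-free, hence also $\cP_s^k$-, $C_s^k$-, and $P_s^k$-free, which supplies the lower bound for all four quantities simultaneously. The key structural fact is that in any minimal $s$-cycle each vertex lies in at most two edges: the edges through a fixed vertex are pairwise intersecting, and in a minimal $s$-cycle two edges intersect only if they are cyclically consecutive, so no three edges can share a common vertex when $s\ge 4$ (while for $s=3$ the ``no vertex in every edge'' clause applies). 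Consequently the $t$ vertices of $[t]$ meet at most $2t\le s-1<s$ of the $s$ edges of any purported copy, a contradiction. A direct computation, with $d(E)=n-k+1$ when $E\cap[t]\neq\emptyset$ and $d(E)=t$ otherwise, yields
\[\co(\cH_t) = \Bigl(\binom{n}{k-1}-\binom{n-t}{k-1}\Bigr)(n-k+1)^2 + \binom{n-t}{k-1}\,t^2 \sim t\,k(k-1)\binom{n}{k}.\]

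\emph{Upper bound.} Suppose $\cH$ is free of any one of the four families. The plan is to combine two ingredients. First, the classical sharp Tur\'an-type results of Kostochka, Mubayi, and Verstra\"ete on linear paths and cycles, together with their analogues for the minimal variants due to F\"uredi, Jiang, Seiver and others, give $|\cH|\le t\binom{n-1}{k-1}(1+o(1))$ for $n$ large. Second, Bey's inequality, applied as in the proof of Theorem~\ref{fullekrl2}, yields asymptotically
\[\co(\cH) \le (k-1)(n-k+1)\,|\cH|\,(1+o(1)).\]
Multiplying the two bounds gives $\co(\cH)\le t(k-1)(n-k+1)\binom{n-1}{k-1}(1+o(1))\sim t\,k(k-1)\binom{n}{k}$, matching the lower bound.

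\emph{Main obstacle.} The hard part is extracting the correct leading asymptotic from Bey's inequality. Using the identity $\co(\cH)=k|\cH|+2\sum_E\binom{d(E)}{2}$, the task reduces to upper-bounding the number of pairs of edges sharing a common $(k-1)$-subset. In Theorem~\ref{fullekrl2} the additive $\binom{n-1}{k-1}$ in $\binom{n-1}{k-1}(1+(n-k+1)(k-1))$ was of the same order as the main term, since $|\cH|$ itself was at the extremal scale $\binom{n-1}{k-1}$; here $|\cH|$ is $t$ times larger, so only the leading coefficient $(k-1)(n-k+1)$ matters. A careful tracking of the leading-order asymptotics of Bey's bound, combined with the classical size bound, should deliver the claimed inequality; the construction $\cH_t$, essentially a union of $t$ stars, witnesses that this leading coefficient is sharp.
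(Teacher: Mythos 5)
Your proposal follows essentially the same route as the paper: the upper bound is Bey's inequality (Theorem~\ref{thm:bey}) combined with the known Tur\'an numbers for these paths and cycles (Theorem~\ref{thm:mlinspathsscycles}), applied exactly as in Proposition~\ref{prop:maintermemc}, and the lower bound is the family $\cB(n,k,\lfloor (s-1)/2\rfloor)$, which is also the paper's witness (the paper merely asserts its tightness, so your explicit codegree computation is a welcome addition). One slip in the lower bound as written: being $\cC_s^k$-free does not imply being $\cP_s^k$-free or $P_s^k$-free --- a hypergraph can contain a minimal or linear $s$-path without containing any $s$-cycle, so that implication runs in the wrong direction. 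The slip is harmless, because your degree argument applies verbatim to paths: in a linear or minimal $s$-path each vertex also lies in at most two edges (only consecutive edges intersect), so the $t=\lfloor (s-1)/2\rfloor$ chosen vertices meet at most $2t\le s-1<s$ edges of any purported copy, and freeness for all four families follows directly rather than by deduction from the cycle case. A cosmetic point: the attributions are swapped --- the linear path and cycle results are due to F\"uredi--Jiang--Seiver and F\"uredi--Jiang, while the minimal variants are due to Kostochka--Mubayi--Verstra\"ete (and Mubayi--Verstra\"ete for $s=3$).
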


The families $\cB(n, k, \lfloor{\frac{s-1}{2}\rfloor})$ show that the main term is tight. The cases $\exco(n, C_s^3)$ and $\exco(n, P_s^3)$ were previously proven by Balogh, Clemen and Lidick\'y~\cite{BCL1}.

We obtain other results in Sections 4 and 5, including a version of the $t$-intersecting Erd\H{o}s-Ko-Rado theorem in the $\ell_2$-norm and a general upper bound on $\co(\cH)$ for any non-$k$-partite $k$-uniform hypergraph. 

In Section 6, we turn our attention to forbidden subgraph problems in the $\ell_2$-norm and show how several asymptotic and extremal results for such problems can be obtained from the corresponding spectral extremal result. For a graph $G$, the \emph{spectral radius} $\lambda_1$ is the largest eigenvalue of the adjacency matrix of $G$. The question of determining the graph on $n$ vertices with maximum spectral radius satisfying some properties was pioneered by Nikiforov~\cite{Nik2002} with his spectral Tur\'an theorem.

We can deduce many extremal results for forbidden subgraph problems in the $\ell_2$-norm from the corresponding spectral extremal result. As a representative example, we can easily prove the following general theorem. 

\begin{theorem}\label{thm:kkn-kgenthm}
Suppose that the extremal graph for a spectral Tur\'an problem forbidding the family of graphs $\sF$ is $K_{k, n-k}$. Then, $K_{k, n-k}$ is also an extremal graph for the corresponding forbidden subgraph problem in the $\ell_2$-norm. 

Suppose that the extremal graph for a spectral Tur\'an problem forbidding the family of graphs $\sF$ is $K_{k} \vee \overline{K_{n-k}}$ or $K_k \vee (\overline{K_{n-k-2}} \cup K_2)$. Then, 
\[\exco(H) = kn^2(1+o(1)).\]
\end{theorem}

As recently shown by Byrne, Desai and Tait~\cite{BDT}, there are many Tur\'an-type problems where the spectral extremal graph is one of $K_{k, n-k}$, $K_k \vee \overline{K_{n-k}}$ or $K_k \vee (\overline{K_{n-k-2}} \cup K_2)$. As one example, Cioab\u{a}, Desai and Tait~\cite{CDT} recently proved a spectral version of the Erd\H{o}s-S\'os conjecture that was originally conjectured by Nikiforov~\cite{Nik2010}. Namely, Cioab\u{a}, Desai and Tait proved that for sufficiently large $n$, if $k\ge 2$ and $G$ is a graph of order $n$ such that $\lambda_1(G) \ge \lambda_1(K_k\vee \overline{K_{n-k}})$, then $G$ contains all trees of order $2k+2$ unless $G\cong K_k\vee \overline{K_{n-k}}$; and similarly if $k\ge 2$ and $G$ is a graph of order $n$ such that $\lambda_1(G) \ge \lambda_1(K_k \vee (\overline{K_{n-k-2}} \cup K_2))$, then $G$ contains all trees of order $2k+3$ unless $G\cong K_k \vee (\overline{K_{n-k-2}} \cup K_2)$. Theorem~\ref{thm:kkn-kgenthm} implies an asymptotic version of the Erd\H{o}s-S\'os conjecture for the $\ell_2$-norm: if $\sF$ is the set of all graphs which contain all trees on $k$ vertices for some $k\ge 6$, then $\exco(n, \sF) = \left(\lfloor{\frac{k}{2}\rfloor}-1\right) n^2(1+o(1))$.

\section{Useful lemmas}
\subsection{A counting lemma for the codegree squared sum}
We give a useful reformulation of the codegree squared sum for a $k$-uniform hypergraph. 

\begin{lemma}\label{lem:reform}
For any $\cF \subseteq \binom{[n]}{k}$,
\[
\co(\cF) = k|\cF|+ 2 \left|\left\{\{F, F'\} \in \binom{\cF}{2} : |F\cap F'| = k -1\right\}\right|.
\]
\end{lemma}
\begin{proof}
We observe
\begin{align*}
    \text{co}_2(\cF) &= \sum_{E\in \binom{[n]}{k-1}}d(E)^2\\
    &= \sum_{E\in \binom{[n]}{k-1}} \left(\sum_{F\in \cF} \mathbbm{1}_{E \subseteq F}\right) \left(\sum_{F'\in \cF} \mathbbm{1}_{E \subseteq F'}\right)\\
    &= \sum_{E\in \binom{[n]}{k-1}} \left(\sum_{(F, F')\in \cF \times \cF} \mathbbm{1}_{E \subseteq F \cap F'}\right)\\
    &= \sum_{(F, F')\in \cF \times \cF}  \left(\sum_{E\in \binom{[n]}{k-1}}\mathbbm{1}_{E \subseteq F \cap F'}\right)\\
    &= \sum_{(F, F')\in \cF \times \cF} \left( k \cdot \mathbbm{1}_{F = F'} + \mathbbm{1}_{|F\cap F'| = k-1} \right)\\
    &= k|\cF| +  2 \left|\left\{\{F, F'\} \in \binom{\cF}{2} : |F\cap F'| = k -1 \right\}\right|.
\end{align*}
\end{proof}

\subsection{Connections to Kleitman-West}
 Lemma~\ref{lem:reform} implies Theorem~\ref{thm:excokwthm}, namely that the Tur\'an-type problem of maximizing the $\ell_2$-norm over all non-$k$-partite $k$-uniform hypergraphs on $n$ vertices which are $\cH$-free is asymptotically equivalent to finding $2e(\cH, \cH)$. 

\begin{proof}[Proof of Theorem~\ref{thm:excokwthm}]
    Balogh, Clemen and Lidick\'y~\cite[Proposition 1.11]{BCL2} showed that the inequality $\sigma(\cH)>0$ holds if and only if $\cH$ is not $k$-partite (generalizing a classic result of Erd\H{o}s~\cite{E1964}). Let $\cF$ be a hypergraph which maximizes $\text{exco}_2(\cH)$. Lemma~\ref{lem:reform} shows that \[\text{co}_2(\cF) = k|\cF| + 2e(\cF, \cF) = 2e(\cF, \cF) + O(n^k),\]
    as $\text{co}_2(\cF) = \Theta(n^{k+1})$, but $k|\cF| = \Theta(n^{k})$. 
\end{proof}

The asymptotic equivalence between $\exco(\cH)$ and $2e(\cF, \cF)$ implied by Theorem~\ref{thm:excokwthm} for non-$k$-partite $k$-uniform hypergraphs is not true in general for $k$-partite $k$-uniform hypergraphs. Let $\cH$ be the $k$-uniform hypergraph consisting of two $k$-edges which intersect in $k-1$ vertices. Any $\cH$-free $k$-uniform hypergraph $\cF$ corresponds to an independent set in the Johnson graph $J(n, k)$, so $e(\cF, \cF) = 0$, whence $\exco(n, \cH) = k\,\text{ex}(n, \cH)$. Note $\text{ex}(n, \cH) = \Theta(n^{k-1})$ since $\chi(J(n, k))\le n$\cite{GS}. 

Lemma~\ref{lem:reform} also implies that for dense $k$-uniform hypergraphs the Kleitman-West problem is asymptotically equivalent to finding the maximum codegree squared sum over all hypergraphs with a fixed edge-density. More precisely, let $\alpha \in (0, 1]$ and consider all $k$-uniform hypergraphs such that $|\cF| = \alpha\binom{n}{k}$. Then for any such hypergraph $\cF$, 
\[
\text{co}_2(\cF) = k|\cF| + 2e(\cF, \cF) = 2e(\cF, \cF) + O(n^k).
\]

\subsection{de Caen's inequality, Bey's inequality and expander mixing}

Let $G$ be a graph on $n$ vertices with degree sequence $d_1 \ge d_2 \ge \cdots \ge d_n$. De Caen~\cite{dC1998} proved the following upper bound on the sum $\sum_{1\le i\le n} d_i^2$.

\begin{theorem}[de Caen]\label{thm:dcthm}
Let $G = (V, E)$ be a graph on $n$ vertices and $e$ edges. Then, 
\[\sum_{i=1}^nd_i^2 \le e\left(\frac{2e}{n-1} + n-2\right).\]
\end{theorem}

Bey~\cite{Bey} generalized Theorem~\ref{thm:dcthm} to $k$-uniform hypergraphs. 
\begin{theorem}[Bey~\cite{Bey}]\label{thm:bey}
Let $\cH = (V, \cE)$ be a $k$-uniform hypergraph with $|V| = n$. Let $\ell$ be an integer satisfying $0\le \ell \le k$. Then,
\[\sum_{E\in \binom{V}{\ell}} d(E)^2 \le \frac{\binom{k}{\ell}\binom{k-1}{\ell}}{\binom{n-1}{\ell}}|\cE|^2 + \binom{k-1}{\ell-1}\binom{n-\ell-1}{k-\ell}|\cE|.\]
In particular, if $\ell = k-1$, then 
\[\sum_{E\in \binom{V}{k-1}}d(E)^2\le \frac{k}{\binom{n-1}{k-1}}|\cE|^2 + (k-1)(n-k)|\cE|.\]
Equality holds if and only $\cH$ is one of the following hypergraphs: the $1$-star $S_k^1 = \{F\in \binom{[n]}{k}: 1 \in F\}$ or the complement of the $1$-star; the complete graph $K_k^n$ or its complement; or if $n=k+1$ and $\cH$ is the $t$-star $S_k^t = \{F \in \binom{[k+1]}{k}: [t] \subset F\}$ for $2\le t\le \lfloor{\frac{k+1}{2}\rfloor}$ or its complement.
\end{theorem}

We show that both Theorem~\ref{thm:dcthm} and the $\ell=k-1$ case of Theorem~\ref{thm:bey} can be proved by Lemma~\ref{lem:reform} and (a slight generalization of) the expander mixing lemma (see \cite[Lemma 4.15]{Vad}). Bey gives a similar proof in \cite{Bey2}, where he also notes the relation between Theorem~\ref{thm:bey} and the Kleitman-West problem. 

 A graph $G$ on $n$ vertices is an \emph{$(n, d, \lambda)$-graph} if $G$ is $d$-regular and all eigenvalues apart from $d$ are at most $\lambda$ in absolute value. For two sets $S, T \subset V(G)$, let $E(S, T)$ be the number of edges with one endpoint in $S$ and one in $T$ (edges with endpoints in $S\cap T$ are counted twice).

\begin{theorem}[Expander mixing lemma]\label{thm:eml}
Let $G$ be an $(n, d, \lambda)$-graph.  Then, 
\[\left|E(S, T) - \frac{d|S||T|}{n}\right| \le \lambda\sqrt{|S||T|\left(1-\frac{|S|}{n}\right)\left(1-\frac{|T|}{n}\right)}. \]
\end{theorem}


Lemma~\ref{lem:reform} implies that 
\[\sum_{E\in \binom{[n]}{k-1}}d(E)^2 = k|\cF| + 2e(\cF, \cF) = k|\cF| + E(\cF, \cF).\]

\begin{proof}[Proofs of Theorems~\ref{thm:dcthm} and \ref{thm:bey}]
We bound $E(\cF, \cF)$ by the expander mixing lemma. The Johnson graph $J(n, k)$ has $d = k(n-k)$ and $\lambda = (k-1)(n-k) - k$, so 
\begin{align*}
    E(\cF, \cF) &\le \frac{k(n-k)|\cF|^2}{\binom{n}{k}} + ((k-1)(n-k)-k)|\cF|\left(1-\frac{|\cF|}{\binom{n}{k}}\right)\\
    & = \frac{n}{\binom{n}{k}}|\cF|^2 + ((k-1)(n-k) - k)|\cF|\\
    & = \frac{k}{\binom{n-1}{k-1}}|\cF|^2 + ((k-1)(n-k) - k)|\cF|
\end{align*}
Lemma~\ref{lem:reform} now implies
\[\text{co}_2(\cF) \le k|\cF| + \frac{k}{\binom{n-1}{k-1}}|\cF|^2 + ((k-1)(n-k) - k)|\cF| = \frac{k}{\binom{n-1}{k-1}}|\cF|^2 + (k-1)(n-k)|\cF|\]
\end{proof}

We record the codegree squared sums of the $t$-star $S_k^t$ and the family $\cB(n, k, s)$.

\begin{lemma}[Codegree squared sum of the $t$-star]\label{lem:starcd}
Let $S_k^t$ be the $t$-star, that is, $S_k^t = \{F \in \binom{[n]}{k}: [t] \subset F\}$. Then, 
\[\co(S_k^t) = \binom{n-t}{k-t}(t + (n-k+1)(k-t)).\]
\end{lemma}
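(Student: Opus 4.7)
The plan is to compute $\co(S_k^t)$ by partitioning the $(k-1)$-subsets $E\subset[n]$ according to $|E\cap[t]|$. Since every edge of $S_k^t$ contains $[t]$, if $E\subset e$ for some edge $e\in S_k^t$ then $|E\cap[t]|\geq t-1$, so $d(E)=0$ unless $|E\cap[t]|\in\{t-1,t\}$. This leaves just two cases to evaluate.

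If $[t]\subset E$, then the edges containing $E$ are exactly the sets $E\cup\{v\}$ for $v\in[n]\setminus E$, so $d(E)=n-k+1$, and the number of such $E$ is $\binom{n-t}{k-1-t}$. If instead $|E\cap[t]|=t-1$, then letting $\{u\}=[t]\setminus E$, the unique edge of $S_k^t$ containing $E$ is $E\cup\{u\}$, so $d(E)=1$; the number of such $E$ is $t\binom{n-t}{k-t}$, obtained by choosing the omitted element of $[t]$ and then the $k-t$ elements of $E$ outside $[t]$.

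Summing the two contributions yields
\[
\co(S_k^t)=\binom{n-t}{k-1-t}(n-k+1)^2+t\binom{n-t}{k-t},
\]
which collapses to the claimed $\binom{n-t}{k-t}\bigl(t+(n-k+1)(k-t)\bigr)$ after applying the elementary identity $\binom{n-t}{k-1-t}(n-k+1)=(k-t)\binom{n-t}{k-t}$. The entire argument is direct enumeration; there is no substantive obstacle, only the need to track the casework on $|E\cap[t]|$ correctly.
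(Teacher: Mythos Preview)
Your proof is correct; the casework on $|E\cap[t]|$, the degree counts, and the final binomial identity are all accurate. The paper itself states this lemma without proof (treating it as a routine computation), and your direct enumeration is exactly the natural argument one would supply.
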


\begin{lemma}[Codegree squared sum of $\cB(n, k, s)$]\label{lem:bcd}
We have
\[\co(\cB(n, k, s)) = s^2\binom{n-s}{k-1} + (n-k+1)^2\left(\binom{n}{k-1} - \binom{n-s}{k-1}\right).\]
\end{lemma}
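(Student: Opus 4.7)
The plan is to compute $\co(\cB(n,k,s))$ directly from the definition $\co(\cG) = \sum_{E \in \binom{[n]}{k-1}} d(E)^2$ by partitioning the $(k-1)$-sets $E$ according to their intersection with $[s]$, and computing the codegree of $E$ in $\cB(n,k,s)$ in each case.

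Specifically, I would split the sum into two cases. First, if $E \subset [n]$ is a $(k-1)$-set with $E \cap [s] \neq \emptyset$, then for any $v \in [n] \setminus E$, the $k$-set $F = E \cup \{v\}$ also intersects $[s]$ and hence lies in $\cB(n,k,s)$; thus $d(E) = n-k+1$. Second, if $E \cap [s] = \emptyset$, then $F = E \cup \{v\} \in \cB(n,k,s)$ if and only if $v \in [s]$, and since $E$ is disjoint from $[s]$ this gives exactly $s$ choices; hence $d(E) = s$.

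To conclude, I would count the $(k-1)$-sets in each case: there are $\binom{n-s}{k-1}$ choices for $E \subset [n]\setminus[s]$, and hence $\binom{n}{k-1} - \binom{n-s}{k-1}$ choices for $E$ with $E \cap [s] \neq \emptyset$. Summing $d(E)^2$ over the two classes yields
\[
\co(\cB(n,k,s)) = s^2 \binom{n-s}{k-1} + (n-k+1)^2\left(\binom{n}{k-1} - \binom{n-s}{k-1}\right),
\]
as claimed. This is a direct bookkeeping argument with no real obstacle; the only point requiring any care is correctly identifying that when $E \cap [s] \neq \emptyset$ every extension of $E$ lies in $\cB(n,k,s)$, which in particular uses only that $E$ already hits $[s]$ and not any assumption on the added vertex.
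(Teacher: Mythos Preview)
Your proof is correct: the two-case split on whether $E$ meets $[s]$ gives exactly the codegrees $n-k+1$ and $s$, and the counts $\binom{n}{k-1}-\binom{n-s}{k-1}$ and $\binom{n-s}{k-1}$ are right. The paper does not supply a proof of this lemma (it is stated and used without argument), so your direct computation is precisely the routine verification the paper leaves implicit.
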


\section{Exact results in the \texorpdfstring{$\ell_2$}{}-norm from Bey's inequality}

A number of hypergraph Tur\'an problems have either the star or $\cB(n, k, s)$ as the exact or asymptotic extremal example. Theorem~\ref{thm:bey} allows us to convert these results in the $\ell_1$-norm to either exact or asymptotic results in the $\ell_2$-norm.  

\subsection{\texorpdfstring{Erd\H{o}s}{}-Ko-Rado}

Theorem~\ref{fullekrl2} is an immediate consequence of Theorems~\ref{ekrthm} and~\ref{thm:bey}.

\begin{proof}[Proof of Theorem~\ref{fullekrl2}]
Let $\cF \subset \binom{[n]}{k}$ with $n\ge 2k$. If $\cF$ is intersecting, then by the Erd\H{o}s-Ko-Rado theorem, $|\cF| \le \binom{n-1}{k-1}$, so Theorem~\ref{thm:bey} implies 
\begin{align*}
\co(\cF) &\le \frac{k}{\binom{n-1}{k-1}}\binom{n-1}{k-1}^2 + (k-1)(n-k)\binom{n-1}{k-1} \\
&= \binom{n-1}{k-1}(k + (k-1)(n-k)) \\
& = \binom{n-1}{k-1}(1 + (k-1)(n-k+1)),
\end{align*}
which by Lemma~\ref{lem:starcd} is the codegree-squared sum of the $1$-star. Equality holds for $n > 2k$ only for the $1$-star, as this is the only case of equality in the Erd\H{o}s-Ko-Rado theorem. If $n=2k$, equality only holds for the $1$-star or its complement by the equality statement of Theorem~\ref{thm:bey}.
\end{proof}

In fact, as Bey~\cite{Bey2} observed, Theorem~\ref{thm:bey} implies that the star maximizes $e_{J(n, k)}(\cF, \cF)$ when $|\cF| = \binom{n-1}{k-1}$.  

\subsection{Paths and cycles}

Theorems~\ref{thm:3cyc} and \ref{thm:kcyc} follow from Theorem~\ref{thm:bey} and the known results on the extremal numbers of $P_s^k$, $C_s^k$, $\cP_s^k$ and $\cC_s^k$. We first state the exact results for $\text{ex}(n, P_3^k)$ and $\text{ex}(n, C_3^k)$. 

\begin{theorem}[Linear $3$-paths and $3$-cycles]\label{thm:lin3paths3cycles}
Let $k\ge 4$ be an integer. Then, for $n$ sufficiently large, 
\[\text{ex}(n, P_3^k) = \binom{n-1}{k-1},\]
with equality only for the $1$-star. Similarly, for $k\ge 3$ and $n$ sufficiently large,
\[\text{ex}(n, C_3^k) = \binom{n-1}{k-1},\]
with equality only for the $1$-star. 
\end{theorem}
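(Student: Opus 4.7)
The plan is to establish the lower bound via the star construction and then prove the upper bound using Frankl's delta-system method.

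For the lower bound, observe that the star $\cS = \{F \in \binom{[n]}{k}: 1 \in F\}$ has $\binom{n-1}{k-1}$ edges, and any three edges of $\cS$ share the vertex $1$. This rules out $C_3^k$ for $k \ge 3$ (the three pairwise intersections would need to be three distinct singletons, yet each of them contains $1$) and rules out $P_3^k$ for $k \ge 4$ (the first and third edges of the linear $3$-path must be disjoint, yet both contain $1$). Hence $\cS$ is free of both forbidden configurations.

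For the upper bound, let $\cH \subset \binom{[n]}{k}$ be $C_3^k$-free (respectively $P_3^k$-free) with $|\cH| > \binom{n-1}{k-1}$. I would apply Frankl's delta-system method to extract a subfamily $\cH^* \subseteq \cH$ of size $\Omega_k(|\cH|)$ that is kernel-regular: for each $j \le k$, any $j$ edges of $\cH^*$ share the same $j$-wise intersection $K_j$, so in particular $\cH^*$ is a sunflower with core $K = K_2$. A case analysis on $|K|$ then drives the argument. If $|K| \ge 2$, every edge of $\cH^*$ contains $K$, so $|\cH^*| \le \binom{n-2}{k-2}$, which contradicts the lower bound on $|\cH^*|$. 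If $|K| = 0$, then $\cH^*$ contains a large matching, and since $|\cH|$ is large, a counting argument locates two disjoint edges together with a third edge of $\cH$ meeting each of them in a single distinct vertex, producing $C_3^k$ (and hence $P_3^k$ by dropping one of the matching edges). If $|K| = \{v\}$, then $\cH^*$ lies in the star through $v$; one shows that any edge of $\cH$ avoiding $v$ combines with two edges of $\cH^*$ hitting it at distinct vertices to realize the forbidden configuration.

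The main obstacle is the case $|K| = 1$: one must verify that, for $n$ sufficiently large, the existence of even one edge $E \in \cH$ outside the sub-star through $v$ already forces a copy of $C_3^k$ or $P_3^k$. The restriction $k \ge 4$ for $P_3^k$ enters here, since in the linear $3$-path the middle edge must contain two vertices that provide the two disjoint singleton intersections with the flanking edges, and for $k = 3$ one of those intersection points would have to coincide with $v$, destroying the disjointness of $E_1$ and $E_3$. The uniqueness statement for $n > 2k$ then follows by observing that $\cH$ must be intersecting and invoking the equality case of the Erd\H{o}s-Ko-Rado theorem.
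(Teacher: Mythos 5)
First, note that the paper itself does not prove this statement: it is quoted from the literature (Frankl--F\"uredi for $C_3^k$, F\"uredi--Jiang--Seiver for $P_3^k$ with $k\ge 4$, Cs\'ak\'any--Kahn for $C_3^3$), so what you are sketching is a reproof of those theorems. Your lower-bound paragraph is fine, but the upper bound rests on a structural claim that is false. You cannot extract from $\cH$ a subfamily $\cH^*$ with $|\cH^*|=\Omega_k(|\cH|)$ in which any two edges have the \emph{same} pairwise intersection $K$: such an $\cH^*$ is literally a sunflower with core $K$, and a sunflower of $k$-sets in $[n]$ has at most $(n-|K|)/(k-|K|)\le n$ members, whereas $\Omega_k(|\cH|)\ge \Omega_k\bigl(\binom{n-1}{k-1}\bigr)$ grows like $n^{k-1}$. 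What Frankl's delta-system method actually yields is much weaker: a positive-proportion subfamily that is homogeneous in the sense that every edge has the same isomorphism type of intersection structure $\{F\cap F': F'\in\cH^*\}$, together with the fact that each edge is the center of a sunflower whose petals come from all of $\cH$. Exploiting that structure to forbid a loose path or triangle is precisely the technical content of the cited papers, and none of it is reconstructed in your case analysis, which silently assumes the (impossible) global sunflower.

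Even granting some correct homogenization, the remaining steps hide the real work. In the case $K=\{v\}$ you only control $\cH^*$, not $\cH$; edges of $\cH\setminus\cH^*$ need not contain $v$, and a third edge meeting two star edges need not meet each of them in \emph{exactly one} vertex, which is what $C_3^k$ and $P_3^k$ demand -- one must choose the star edges so that both intersections are singletons and (for the triangle) the three intersection vertices are distinct, and this selection argument is where the hypotheses on $n$ and $k$ genuinely enter (your explanation of why $k\ge 4$ is needed for $P_3^k$ is not correct: the star is $P_3^3$-free as well; the restriction reflects which uniformities the known exact upper bound covers). Finally, the uniqueness step is circular: being $C_3^k$- or $P_3^k$-free does not imply intersecting (a perfect matching is free of both), and proving that an extremal family must be intersecting, hence a star, is the heart of the Frankl--F\"uredi and F\"uredi--Jiang--Seiver theorems rather than something you may obtain by ``observing'' it and then invoking the equality case of Erd\H{o}s--Ko--Rado.
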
 
Frankl and F\"{u}redi~\cite{FF1987} determined the extremal examples for $\text{ex}(n, C_3^k)$ for large $n$ and F\"{u}redi, Jiang and Seiver~\cite{FJS2014} determined the extremal examples for $\text{ex}(n, P_3^k)$, $n$ large and $k\ge 4$. Cs\'ak\'any and Kahn~\cite{CK1999} showed $\text{ex}(n, C_3^3) = \binom{n-1}{2}$ for $n\ge 6$. 

The exact results for $\text{ex}(n, \cP_3^k)$~\cite{MV1} and $\text{ex}(n, \cC_3^k)$~\cite{MV2} were proven by Mubayi and Verstra\"{e}te.

\begin{theorem}[Minimal $3$-paths and $3$-cycles]\label{thm:min3paths3cycles}
Let $k\ge 3$ be an integer. Then, for $n \ge 2k$,
\[\text{ex}(n, \cP_3^k) = \binom{n-1}{k-1},\]
with equality only for the $1$-star. Similarly, for $k\ge 3$ and $n \ge \frac32 k$,
\[\text{ex}(n, \cC_3^k) = \binom{n-1}{k-1},\]
with equality only for the $1$-star. 
\end{theorem}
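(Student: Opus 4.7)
Theorem~\ref{thm:min3paths3cycles} is cited verbatim from Mubayi and Verstra\"{e}te \cite{MV1, MV2}, so the author will almost certainly just refer to those papers. To sketch a proof from scratch, note that the $\cP_3^k$-free and $\cC_3^k$-free cases admit different structural reformulations, so I would handle them separately.

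For the path result, $\cF$ contains no minimal $3$-path iff the intersection relation is transitive on $\cF$ (if $E_1\cap E_2\ne\emptyset$ and $E_2\cap E_3\ne\emptyset$ then $E_1\cap E_3\ne\emptyset$), equivalently the intersection graph of $\cF$ is a disjoint union of cliques. Since edges in different cliques are pairwise disjoint, their vertex sets are themselves disjoint, so I would write $\cF = \cF_1 \sqcup \cdots \sqcup \cF_m$ with each $\cF_i$ intersecting on a vertex set $V_i$, and the $V_i$'s pairwise disjoint. Applying the classical Erd\H{o}s-Ko-Rado bound to each $\cF_i$ with $|V_i|\ge 2k$ and trivial bounds for the smaller pieces, then using convexity of $\binom{x-1}{k-1}$ to consolidate the sum, would yield $|\cF| \le \binom{n-1}{k-1}$ for $n \ge 2k$ with equality only for the star.

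For the cycle result, the key structural property is \emph{triple-Helly}: every three pairwise intersecting edges must share a common vertex. I would split on whether $\cF$ is intersecting. If $\cF$ is intersecting and some pair $E_1, E_2$ has $|E_1\cap E_2|=1$, triple-Helly forces every third edge to contain the common vertex, so $\cF$ is a star. If every pair meets in $\ge 2$ vertices, then $\cF$ is $2$-intersecting, and one must further exploit triple-Helly to rule out the Ahlswede-Khachatrian ``frequency'' constructions that are otherwise the optimizers for $2$-intersecting families. If $\cF$ is not intersecting (which requires $n\ge 2k$), pick disjoint $E, E'\in\cF$; by $\cC_3^k$-freeness no $F\in\cF$ may intersect both (else $E, E', F$ would be a minimal $3$-cycle), giving a vertex-restricted decomposition of $\cF$ that I would close by iteration or induction on $n$.

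The main obstacle is the refined $2$-intersecting count in the cycle case: a naive application of the $2$-intersecting Erd\H{o}s-Ko-Rado bound is insufficient to beat $\binom{n-1}{k-1}$ across the full range $n\ge \tfrac{3k}{2}$, and one needs a delta-system or shifting argument to simultaneously use triple-Helly and the $2$-intersecting structure. This is precisely the technical core of Mubayi and Verstra\"{e}te's proofs in \cite{MV1, MV2}.
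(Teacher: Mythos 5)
The paper offers no proof of this theorem: it is stated as a known result and attributed directly to Mubayi and Verstra\"{e}te, with $\text{ex}(n, \cP_3^k)$ cited to \cite{MV1} and $\text{ex}(n, \cC_3^k)$ to \cite{MV2}, exactly as you anticipated. Your supplementary sketch correctly identifies the structural reformulations those papers start from (transitivity of the intersection relation for minimal $3$-paths, and the ``triangle''/triple-Helly condition for minimal $3$-cycles), and the difficulty you flag --- the $2$-intersecting regime in the range $\tfrac{3k}{2}\le n < 2k$ for the cycle case --- is precisely the technical core handled in the cited papers, so deferring to the citations is the same approach the paper takes.
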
 

\begin{proof}[Proof of Theorem~\ref{thm:3cyc}]
Theorem~\ref{thm:bey} as used in the proof of the Erd\H{o}s-Ko-Rado theorem in the $\ell_2$-norm and the extremal number results in Theorems~\ref{thm:lin3paths3cycles} and~\ref{thm:min3paths3cycles} imply the claims. 
\end{proof}

For $s\ge 4$, the exact results for large $n$ are known for $\text{ex}(n, P_s^k)$ and for $\text{ex}(n, \cP_s^k)$ by the work of F\"{u}redi, Jiang and Seiver~\cite{FJS2014} and Kostochka, Mubayi and Verstra\"{e}tre~\cite{KMV}. The exact results for large $n$ were determined for $\text{ex}(n, C_s^k)$ and for $\text{ex}(n, \cC_s^k)$ by F\"{u}redi and Jiang~\cite{FJ2014} and by Kostochka, Mubayi and Versta\"{e}te~\cite{KMV}. We only need the main term of the result. 

\begin{theorem}[Minimal and linear $s$-paths and $s$-cycles]\label{thm:mlinspathsscycles}
Let $s$ and $k$ be integers with $s\ge 4$ and $k\ge 3$ and set $\ell = \lfloor{\frac{s-1}{2}\rfloor}$. Then, for $n$ sufficiently large, 
\[\text{ex}(n, P_s^k) = \text{ex}(n, C_s^k) = \text{ex}(n, \cP_s^k) = \text{ex}(n, \cC_s^k) = \binom{n}{k} - \binom{n-\ell}{k} + O(n^{k-2}).\]
\end{theorem}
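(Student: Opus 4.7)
The plan is to prove matching lower and upper bounds, both accurate up to an additive error of $O(n^{k-2})$. For the lower bound, I would exhibit the family $\cB(n,k,\ell)$ with $\ell=\lfloor (s-1)/2\rfloor$, which satisfies $|\cB(n,k,\ell)|=\binom{n}{k}-\binom{n-\ell}{k}$, and verify that it is simultaneously $P_s^k$-free, $C_s^k$-free, $\cP_s^k$-free, and $\cC_s^k$-free. The key structural fact is that in any linear or minimal $s$-path or $s$-cycle, every vertex lies in at most two edges: for the linear versions this follows from the definition, and for the minimal versions from the intersection pattern (consecutive edges share a vertex, non-consecutive ones are disjoint for linear, while minimal cycles forbid a common vertex of all edges, forcing each single vertex to appear in few edges). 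If all $s$ edges of such a configuration met the transversal $[\ell]$, pigeonhole would force some vertex of $[\ell]$ to lie in at least $\lceil s/\ell\rceil\ge 3$ edges, since $\ell<s/2$ in all cases. This contradicts the two-edge bound, so $\cB(n,k,\ell)$ contains no such subhypergraph.

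The upper bound reduces to showing $\mathrm{ex}(n, P_s^k)$ and $\mathrm{ex}(n, C_s^k)$ are both at most $\binom{n}{k}-\binom{n-\ell}{k}+O(n^{k-2})$. Indeed, the specific linear $s$-path $P_s^k$ is itself a minimal $s$-path, and likewise $C_s^k\in\cC_s^k$, so any $\cP_s^k$-free hypergraph is automatically $P_s^k$-free and any $\cC_s^k$-free hypergraph is automatically $C_s^k$-free. This yields the inequalities $\mathrm{ex}(n,\cP_s^k)\le\mathrm{ex}(n,P_s^k)$ and $\mathrm{ex}(n,\cC_s^k)\le\mathrm{ex}(n,C_s^k)$, which, combined with the lower bound above, squeeze all four extremal numbers to the same asymptotic expression. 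The two remaining upper bounds on $\mathrm{ex}(n,P_s^k)$ and $\mathrm{ex}(n,C_s^k)$ are exactly what F\"{u}redi-Jiang-Seiver~\cite{FJS2014}, F\"{u}redi-Jiang~\cite{FJ2014}, and Kostochka-Mubayi-Verstra\"{e}te~\cite{KMV} establish.

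The hard part, were one to attempt a self-contained proof rather than cite these works, is the delta-system/sunflower reduction used in those papers. One would pass to a large subfamily of an extremal configuration in which every edge has a controlled $j$-core for each $j$, then argue that these cores must collectively lie on an $\ell$-element transversal up to $O(n^{k-2})$ edges; any greater deviation leaves room to splice together distinct core-edges into a linear $s$-path, and for cycles one must additionally close the path back on itself, which requires handling the parity that produces the floor $\lfloor(s-1)/2\rfloor$. Since these bounds are already in the literature, I would simply invoke the cited theorems, combine them with the hierarchy $P_s^k\in\cP_s^k$ and $C_s^k\in\cC_s^k$, and read off the common asymptotic $\binom{n}{k}-\binom{n-\ell}{k}+O(n^{k-2})$.
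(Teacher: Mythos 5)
Your proposal is correct in substance, but it takes a slightly different route from the paper: the paper does not prove this theorem at all, it simply quotes the exact Tur\'an results of F\"uredi--Jiang--Seiver~\cite{FJS2014}, F\"uredi--Jiang~\cite{FJ2014} and Kostochka--Mubayi--Verstra\"ete~\cite{KMV} for \emph{all four} families and extracts the common main term, whereas you cite the literature only for the linear path and linear cycle and recover the minimal-family bounds yourself via the containments $P_s^k\in\cP_s^k$, $C_s^k\in\cC_s^k$ (so $\text{ex}(n,\cP_s^k)\le \text{ex}(n,P_s^k)$ and $\text{ex}(n,\cC_s^k)\le \text{ex}(n,C_s^k)$) together with the explicit lower-bound construction $\cB(n,k,\ell)$. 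That reduction is valid, and it buys a partially self-contained argument that needs fewer black boxes; the paper's pure citation buys brevity and also the exact (not just asymptotic) values. Your pigeonhole argument that $\cB(n,k,\ell)$ avoids all four configurations is sound, since for $s\ge 4$ no vertex of a linear or minimal $s$-cycle (or the corresponding path) lies in three edges and $\ell<s/2$.

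Two small points to tighten. First, your justification of the two-edge bound in the minimal case leans on the ``no common vertex of all edges'' clause, which by itself does not bound vertex degrees; the correct reason is the same as in the linear case, namely that by definition non-consecutive edges of a minimal cycle are disjoint, so any three edges (for $s\ge4$) contain a disjoint pair. Second, be aware of an edge-count convention: the paper literally defines an $s$-path as an $s$-cycle minus an edge (hence $s-1$ edges), under which your pigeonhole for paths (and indeed the stated formula with $\ell=\lfloor (s-1)/2\rfloor$) would fail for odd $s$, since $\cB(n,k,\ell)$ does contain a linear path with $s-1$ edges in that case; your argument implicitly, and correctly for matching the cited literature, treats $P_s^k$ and the members of $\cP_s^k$ as having $s$ edges, and you should say so explicitly.
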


\begin{proof}[Proof of Theorem~\ref{thm:kcyc}]
By Theorem~\ref{thm:mlinspathsscycles}, for large $n$, $|\cF| \le \binom{n}{k} - \binom{n-\ell}{k}  = \ell k\frac{n^{k-1}}{k!}(1 + o(1))$, where $\cF$ is an $\cH$-free hypergraph for some $\cH \in \{P_s^k, C_s^k, \cP_s^k, \cC_s^k\}$. Therefore, Theorem~\ref{thm:bey} implies 
\begin{align*}
\co(\cF) & \le \frac{k}{\binom{n-1}{k-1}}\left(\ell k\frac{n^{k-1}}{k!}(1 + o(1))\right)^2 + (k-1)(n-k)\ell k\frac{n^{k-1}}{k!}(1 + o(1))\\
& = \ell k(k-1)\binom{n}{k}(1 + o(1)).
\end{align*}
Lemma~\ref{lem:bcd} implies that $\co(\cB(n, k, \ell)) = \ell k(k-1)\binom{n}{k}(1+o(1))$. 
\end{proof}

\subsection{Set systems with no cluster or no simplex-cluster}
Let $d\ge 1$ be a positive integer. Let $\cA:= \{A_1, A_2, \ldots, A_{d+1}\} \subset \binom{[n]}{k}$ be a family of $d+1$ $k$-sets over the ground set $[n]$. The family $\cA$  is a \emph{$d$-simplex} if $\cap_{i=1}^{d+1}A_i = \emptyset$, but for each $1\le j\le d+1$, $\cap_{i\neq j}A_i \neq \emptyset$. The family $\cA$ is a \emph{$d$-cluster} if $\cap_{i=1}^{d+1}A_i = \emptyset$ and $\left|\cup_{i=1}^{d+1}A_i\right|\le 2k$. If $\cA$ is both a $d$-simplex and a $d$-cluster, then $\cA$ is a $d$-simplex-cluster. 

Note that a $1$-simplex and a $1$-cluster are both equivalent to an intersecting family, and a $2$-simplex is equivalent to a minimal $3$-cycle. Chv\'atal~\cite{Ch} conjectured that the $1$-star is the maximum size $k$-uniform family with no $d$-simplex for all $n\ge (d+1)k/d$. This was proved for $n\ge n_0(k, d)$ by Frankl and F\"{u}redi~\cite{FF1987}. Mubayi~\cite{Mub2006} conjectured that the $1$-star is also the maximum size $k$-uniform family with no $d$-cluster for $n\ge (d+1)k/d$. Keevash and Mubayi~\cite{KM} further conjectured that the $1$-star is also the maximum size $k$-uniform family with no $d$-simplex-cluster. This last conjecture was proved for $n\ge n_0(d)$ by Lifshitz~\cite{Lif2020}.

The maximum size of a $k$-uniform family $\cF$ with no $d$-cluster was completely solved by Currier~\cite{Cur1}. 

\begin{theorem}[\cite{Cur1}]\label{thm:dclustthm}
Let $n, k, d$ be positive integers with $1\le d\le k$ and $n\ge \frac{d+1}{d}k$. Suppose $\cF \subset \binom{[n]}{k}$ does not contain a $d$-cluster. Then, 
\[|\cF|\le \binom{n-1}{k-1},\]
and except for the case $d=1$ and $n=2k$ equality holds only if $\cF \cong S_k^1$. 
\end{theorem}

Currier also obtained the current best result for families containing no $d$-simplex-cluster. 

\begin{theorem}[\cite{Cur2}]\label{thm:dsimpclustthm}
Let $n, k, d$ be positive integers with $4\le d+1\le k$ and $n\ge 2k-d+2$. Suppose $\cF \subset \binom{[n]}{k}$ does not contain a $d$-simplex-cluster. Then, 
\[|\cF| \le \binom{n-1}{k-1},\]
and equality holds only if $\cF \cong S_k^1$. 
\end{theorem}

Note that any family which contains no $d$-simplex-cluster also contains no $d$-simplex. 

Theorem~\ref{thm:bey} combined with Theorems~\ref{thm:dclustthm} and~\ref{thm:dsimpclustthm} shows that stars are also extremal for families without simplices, clusters, or simplex-clusters. 

\begin{theorem}[Set systems with no $d$-cluster in $\ell_2$-norm]\label{thm:dclustthmell2}
Let $n, k, d$ be positive integers with $1\le d\le k$ and $n\ge \frac{d+1}{d}k$. Suppose $\cF \subset \binom{[n]}{k}$ does not contain a $d$-cluster. Then, 
\[\co(\cF) \le \binom{n-1}{k-1}(1+(n-k+1)(k-1)),\]
and except for the case $d=1$ and $n=2k$ equality holds only if $\cF \cong \cS_k^1$. If $d=1$ and $n=2k$, equality holds only if $\cF \cong S_k^1$ or $\cF\cong \binom{[2k-1]}{k}$. 
\end{theorem}

Note that in the case $d=1$ and $n=2k$ there are only two cases of equality, as in Theorem~\ref{fullekrl2}. 

\begin{theorem}[Set systems with no $d$-simplex-cluster in $\ell_2$-norm]\label{thm:dsimpclustthmell2}
Let $n, k, d$ be positive integers with $4\le d+1\le k$ and $n\ge 2k-d+2$. Suppose $\cF \subset \binom{[n]}{k}$ does not contain a $d$-simplex-cluster. Then, 
\[\co(\cF) \le \binom{n-1}{k-1}(1+(n-k+1)(k-1)),\]
and equality holds only if $\cF \cong S_k^1$. 
\end{theorem}

\section{\texorpdfstring{Erd\H{os}}{} Matching Conjecture and \texorpdfstring{$t$-intersecting}{} \texorpdfstring{Erd\H{o}s}{}-Ko-Rado}

In this section, we use Theorem~\ref{thm:bey} to prove versions of the Erd\H{o}s Matching Conjecture  and the $t$-intersecting Erd\H{o}s-Ko-Rado theorem in the $\ell_2$-norm for sufficiently large $n$. We only prove the theorems for very large $n$, but expect the statements should hold for close to the range of $n$ for the $\ell_1$-norm in both of these settings.

\subsection{\texorpdfstring{Erd\H{o}s}{} Matching Conjecture in \texorpdfstring{$\ell_2$}{}-norm}

We first observe that Theorem~\ref{thm:bey} implies the main term in the Erd\H{o}s Matching Conjecture for $n$ sufficiently large by the same argument as the proof for Theorem~\ref{thm:kcyc}. 

\begin{proposition}\label{prop:maintermemc}
Let $n, k, s$ be positive integers, and let $\cF \subset \binom{[n]}{k}$ be a $k$-uniform hypergraph with $\nu(\cF) \le s$. Then, for $n$ sufficiently large,
\[\co(\cF) \le sk(k-1)\binom{n}{k}(1+o(1)),\]
and the family $\cB(n, k, s)$ shows that the main term is tight. 
\end{proposition}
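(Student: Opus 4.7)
The plan is to combine Bey's inequality (Theorem~\ref{thm:bey}) with the classical Erd\H{o}s matching theorem from 1965. Erd\H{o}s proved that for $n$ sufficiently large in terms of $k$ and $s$, any $\cF \subset \binom{[n]}{k}$ with $\nu(\cF) \le s$ satisfies $|\cF| \le |\cB(n,k,s)| = \binom{n}{k} - \binom{n-s}{k}$. Telescoping this difference gives $|\cF| \le \sum_{i=0}^{s-1} \binom{n-1-i}{k-1} = s\binom{n-1}{k-1}(1+o(1))$ as $n \to \infty$.

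Next, I would apply Theorem~\ref{thm:bey} with $\ell = k-1$ to obtain
\[
\co(\cF) \le \frac{k}{\binom{n-1}{k-1}}|\cF|^2 + (k-1)(n-k)|\cF|.
\]
Substituting the bound from Step~1, the first term is at most $ks^2\binom{n-1}{k-1}(1+o(1)) = O(n^{k-1})$, which is lower order. For the second term, using the identity $n\binom{n-1}{k-1} = k\binom{n}{k}$, we have
\[
(k-1)(n-k)\cdot s\binom{n-1}{k-1}(1+o(1)) = sk(k-1)\binom{n}{k}(1+o(1)),
\]
which yields the claimed upper bound.

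For tightness, I would apply Lemma~\ref{lem:bcd} to $\cB(n,k,s)$: the term $s^2\binom{n-s}{k-1}$ is of order $n^{k-1}$, while $(n-k+1)^2\bigl(\binom{n}{k-1} - \binom{n-s}{k-1}\bigr) \sim n^2 \cdot s\binom{n-1}{k-2} \sim sk(k-1)\binom{n}{k}$, matching the main term. There is no real obstacle here: both the upper bound and the matching construction amount to asymptotic bookkeeping on top of the known extremal size of $\nu \le s$ families. The one subtlety worth flagging is that the squared term from Bey's inequality would have contributed an asymptotically comparable $s^2 k\binom{n-1}{k-1}/\binom{n-1}{k-1}\cdot |\cF|$-type quantity had we not had the linear saving coming from dividing $|\cF|^2$ by $\binom{n-1}{k-1}$; thus it is essential that Bey's inequality beats the trivial bound $\co(\cF) \le (n-k+1)|\cF|\cdot k$ that one would get just from $d(E) \le n-k+1$, ensuring the correct leading coefficient $sk(k-1)$ rather than something larger.
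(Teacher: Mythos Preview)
Your proof is correct and follows essentially the same route as the paper: bound $|\cF|$ via Erd\H{o}s' matching theorem, feed that into Bey's inequality (Theorem~\ref{thm:bey}) to see that the quadratic term is $O(n^{k-1})$ while the linear term gives $sk(k-1)\binom{n}{k}(1+o(1))$, and then invoke Lemma~\ref{lem:bcd} for tightness. The only cosmetic difference is that you telescope $\binom{n}{k}-\binom{n-s}{k}$ and use $n\binom{n-1}{k-1}=k\binom{n}{k}$ explicitly, whereas the paper just writes everything as $sk\,n^{k-1}/k!\,(1+o(1))$.
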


Unfortunately, the lower order term in the bound from Theorem~\ref{thm:bey} is larger than the lower order term in $\co(\cB(n, k, s))$. To obtain an exact result, we use the following stability result for the Erd\H{o}s Matching Conjecture in the $\ell_1$-norm proven by Frankl and Kupavaskii~\cite{FK}. Recall the \emph{covering number} $\tau(\cF)$ of a family $\cF$ is the smallest size of a set $A \subset [n]$ such that $A\cap F\neq\emptyset$ for every $F\in \cF$. 

\begin{theorem}[Frankl-Kupavskii]\label{thm:fkthm}
Fix integers $s, k \ge 2$. Let $n = (u+s-1)(k-1) + s + k$, $u\ge s+1$. Then, for any family $\cF \subset \binom{[n]}{k}$ with $\nu(\cF) = s$ and $\tau(\cF) \ge s+1$, 
\[|\cF| \le \binom{n}{k} - \binom{n-s}{k} - \frac{u-s-1}{u}\binom{n-s-k}{k-1}.\]
\end{theorem}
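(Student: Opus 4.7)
The plan is to establish a Hilton-Milner type stability bound for the Erd\H{o}s Matching Conjecture. The hypothesis $\tau(\cF) \ge s+1$ rules out the ``star-like'' structure of $\cB(n,k,s)$, which has covering number exactly $s$, and the task is to quantify how much smaller $\cF$ must be as a consequence. I would begin by taking a maximum matching $M = \{F_1, \ldots, F_s\} \subset \cF$ and letting $U = F_1 \cup \cdots \cup F_s$, so $|U| = sk$; maximality of $M$ forces every edge of $\cF$ to meet $U$, which already yields the weak bound $|\cF| \le \binom{n}{k} - \binom{n-s}{k}$ if we had $\tau(\cF) = s$.

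Next, I would pass to a shifted family $\cF'$ via iterated $(i,j)$-shifts, which preserves both $|\cF|$ and $\nu(\cF) = s$. In the shifted regime, the covering hypothesis forces a witness edge $F_0 \in \cF'$ disjoint from the canonical transversal $[s]$; by further shifting one may assume $F_0 = \{s+1, \ldots, s+k\}$. The edges of $\cF'$ then naturally split into those meeting $[s]$, contributing at most $\binom{n}{k} - \binom{n-s}{k}$, and those avoiding $[s]$ and lying in $\binom{\{s+1,\ldots,n\}}{k}$.

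The crux is to bound the second class and extract the deficit $\frac{u-s-1}{u}\binom{n-s-k}{k-1}$. Using the parametrization $n = (u+s-1)(k-1)+s+k$, the $n-s-k$ vertices outside $[s] \cup F_0$ partition naturally into $u-1$ blocks of size $k-1$. A delta-system / kernel decomposition of the subfamily avoiding $[s]$, combined with the matching constraint $\nu(\cF') = s$, forces a sizeable deficit: for each block one either finds many ``usable'' edges (contributing $\binom{n-s-k}{k-1}$-type terms) or a configuration which, together with $F_0$ and $\{F_1,\ldots,F_{s-1}\}$ suitably chosen, produces a matching of size $s+1$, contradicting the hypothesis. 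Averaging over the $u-1$ blocks and solving the resulting linear inequality yields the coefficient $\frac{u-s-1}{u}$: the numerator counts blocks that must be ``sacrificed'' to avoid enlarging the matching, and the denominator arises from averaging over which block plays that role.

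The main obstacle, and the technical heart of this Frankl-Kupavskii style argument, is obtaining the \emph{exact} constant $\frac{u-s-1}{u}$ rather than a cruder $\Omega(1)$ deficit. A direct counting produces a bound of the correct order but an inferior constant; the sharp constant requires an LP-style optimization over per-block contributions, together with a careful sunflower/delta-system decomposition of $\cF'$. The boundary case $u=s+1$ is particularly delicate, since the stated deficit degenerates to zero and the bound coincides with $|\cB(n,k,s)|$ exactly, so the argument must be tight there and cannot afford any slack. Verifying that a suitable Hilton-Milner-style construction (built from $\cB(n,k,s-1)$ together with a shifted substructure on $\{s,s+1,\ldots,s+k\}$) saturates the bound would complete the analysis by showing the estimate is best possible.
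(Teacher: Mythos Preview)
The paper does not prove this statement at all: Theorem~\ref{thm:fkthm} is quoted verbatim from Frankl and Kupavskii~\cite{FK} and used as a black box in the proof of Theorem~\ref{thm:emcl2}. There is therefore no ``paper's own proof'' to compare your proposal against.

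As for the proposal itself, what you have written is not a proof but a plan with several acknowledged gaps. The broad strokes (shift, find a witness edge disjoint from $[s]$, split into edges meeting $[s]$ and edges avoiding it, extract a deficit from the second class) are the natural shape of such an argument, but the decisive step --- obtaining the exact coefficient $\frac{u-s-1}{u}$ via what you call an ``LP-style optimization over per-block contributions'' and a ``delta-system decomposition'' --- is only gestured at, and you yourself flag it as the main obstacle. In particular, the claimed partition of the $n-s-k$ remaining vertices into $u-1$ blocks of size $k-1$ is arithmetically off (since $n-s-k=(u+s-1)(k-1)$, not $(u-1)(k-1)$), and the averaging heuristic that is supposed to produce $\frac{u-s-1}{u}$ is not substantiated. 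If you actually need this bound, you should cite~\cite{FK} as the paper does; reproducing Frankl and Kupavskii's argument in full is a substantial undertaking well beyond a sketch of this kind.
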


Note that $k$-uniform hypergraphs with $\nu(\cF) = s$ and $\tau(\cF) \ge s+1$ are those which are not a subfamily of $\cB(n, k, s)$. We now prove Theorem~\ref{thm:emcl2}. 

\begin{proof}[Proof of Theorem~\ref{thm:emcl2}]
If $\cF \subset \cB(n, k, s)$, then the claim is immediate, so assume $\cF \not\subset \cB(n, k, s)$. Then, by Theorem~\ref{thm:fkthm}, for $u$ (and $n$) sufficiently large, $|\cF| \le \binom{n}{k} - \binom{n-s}{k} - \frac12\binom{n-s-k}{k-1}$, so as in the proof of Proposition~\ref{prop:maintermemc}, Theorem~\ref{thm:bey} implies for $n$ sufficiently large
\[\co(\cF) \le \left(\left(s-\frac12\right)k(k-1)\right)\binom{n}{k}(1+o(1)).\]
\end{proof} 

\subsection{\texorpdfstring{$t$-intersecting}{} \texorpdfstring{Erd\H{o}s}{}-Ko-Rado in \texorpdfstring{$\ell_2$}{}-norm}

Erd\H{o}s, Ko and Rado~\cite{EKR} proved that the $t$-stars $S_k^t$ are maximum-size $t$-intersecting $k$-uniform families for sufficiently large $n$. Wilson~\cite{Wil} proved the optimal range on $n$ for all $t$ and $k$. 

\begin{theorem}[$t$-intersecting Erd\H{o}s-Ko-Rado]\label{thm:tintekr}
Let $n, k$ and $t$ be positive integers with $k > t$. Let $\cF\subset \binom{[n]}{k}$ be a $t$-intersecting family. Then, for $n\ge (t+1)(k-t+1)$, 
\[|\cF| \le \binom{n-t}{k-t}.\]
Equality holds for $n > (t+1)(k-t+1)$ only if $\cF \cong \{F \in \binom{[n]}{k}: [t] \subset F\}.$
\end{theorem}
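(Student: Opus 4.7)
My plan is to follow Frankl's shifting approach, which is likely what the author has in mind (the original Wilson proof uses eigenvalue analysis of the Johnson scheme and is considerably more involved). The first step is to reduce to a \emph{shifted} $t$-intersecting family: the shift operator $s_{ij}$ for $1\le i < j\le n$ replaces each $F \in \cF$ with $j \in F$ and $i \notin F$ by $(F \setminus \{j\}) \cup \{i\}$ whenever this new set is not already in $\cF$. A short case check on pairs of sets shows that $s_{ij}$ preserves both $|\cF|$ and the $t$-intersecting property, so we may assume $\cF$ is shifted.

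For a shifted $t$-intersecting $\cF$, I would argue by induction on $n + k$. Partition $\cF$ into $\cF_+ = \{F \in \cF : n \in F\}$ and $\cF_- = \cF \setminus \cF_+$, and let $\cL = \{F \setminus \{n\} : F \in \cF_+\} \subset \binom{[n-1]}{k-1}$. Then $\cF_-$ is a $t$-intersecting family of $k$-sets on $[n-1]$, and $\cL$ is $(t-1)$-intersecting on $[n-1]$. The crucial extra input from shiftedness is a cross-intersection property between $\cL$ and $\cF_-$: if $G \in \cL$ and $F \in \cF_-$ satisfied $|F \cap G| = t-1$, one could apply a shift moving $n$ out of $G \cup \{n\}$ to produce two sets in $\cF$ with intersection smaller than $t$, a contradiction. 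This lets one in effect treat $\cL$ as $t$-intersecting relative to $\cF_-$, so the inductive bounds $|\cF_-| \le \binom{n-1-t}{k-t}$ and $|\cL| \le \binom{n-1-t}{k-1-t}$ combine via Pascal's identity to yield $|\cF| \le \binom{n-t}{k-t}$.

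The main obstacle is closing the induction at the sharp range $n \ge (t+1)(k-t+1)$: the bound is genuinely tight, since for smaller $n$ the Frankl families $\cF_r = \{F : |F \cap [t+2r]| \ge t + r\}$ can exceed $|S_k^t|$, so any loose step in the induction would collapse the argument. For the uniqueness statement when $n > (t+1)(k-t+1)$, one tracks when equality can hold in each inductive step; since shifting may destroy the star shape, uniqueness must be recovered by reversing the shifts, observing that a shifted extremal family must have every set contain $[t]$ and that for $n$ strictly exceeding $(t+1)(k-t+1)$, the common $t$-set is forced to be preserved under the inverse shift operations.
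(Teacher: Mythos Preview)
The paper does not give its own proof of this theorem: it is stated as background, with the bound attributed to Erd\H{o}s--Ko--Rado and the sharp range $n\ge (t+1)(k-t+1)$ to Wilson (references \cite{EKR} and \cite{Wil} in the paper). So there is no in-paper argument to compare against; the paper simply quotes the result and moves on.

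On the merits of your sketch: the shifting-plus-induction strategy is standard and the first steps are fine. In particular, for a shifted $t$-intersecting $\cF$ with $n>2k-t$ one can indeed show that $\cL=\{F\setminus\{n\}:n\in F\in\cF\}$ is genuinely $t$-intersecting (not merely ``$t$-intersecting relative to $\cF_-$'' via cross-intersection, which is how you phrase it): if $|G_1\cap G_2|=t-1$ for $G_1,G_2\in\cL$, pick $j\in[n-1]\setminus(G_1\cup G_2)$ (possible since $|G_1\cup G_2|\le 2k-t-1<n-1$) and note $G_1\cup\{j\}\in\cF$ by shiftedness, yet $|(G_1\cup\{j\})\cap(G_2\cup\{n\})|=t-1$, a contradiction. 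The inductive hypothesis then applies to $\cL\subset\binom{[n-1]}{k-1}$ since $n-1\ge(t+1)(k-t)$ follows from $n\ge(t+1)(k-t+1)$.

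The genuine gap is the one you flag but do not close: applying induction to $\cF_-\subset\binom{[n-1]}{k}$ requires $n-1\ge(t+1)(k-t+1)$, which fails exactly at the threshold $n=(t+1)(k-t+1)$. This is not a technicality---at that value of $n$ the Frankl family $\cA(n,k,t)$ matches the star, so the bound is tight from a second direction and the naive induction really does break. Frankl's shifting proof gets around this via the structural lemma that every set in a shifted $t$-intersecting family satisfies $|F\cap[t+2i]|\ge t+i$ for some $i\ge 0$, and then argues more carefully; Wilson's proof avoids the issue entirely via the eigenvalues of the Johnson scheme. Your proposal identifies the obstacle but supplies no mechanism to overcome it, so as written it does not establish the theorem in the stated range. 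The uniqueness discussion is likewise only a plan rather than an argument.
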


Theorem~\ref{thm:bey} is insufficient to determine the maximum size $k$-uniform $t$-intersecting family in the $\ell_2$-norm. We additionally need the $t$-intersecting version of the Hilton-Milner theorem, a strong stability result for the $t$-intersecting Erd\H{o}s-Ko-Rado theorem.

A family of sets $\cF$ is \emph{nontrivial $t$-intersecting} if $\cF$ is $t$-intersecting and $|\cap_{F\in \cF}F| < t$. The nontrivial $t$-intersecting families are those which are not subfamilies of the $t$-stars. We introduce two particular nontrivial $t$-intersecting families $\cA(n, k, t)$ and $\cH(n, k, t)$. 

\[\cH(n, k, t):= \{F\subset \binom{[n]}{k}: [t] \subset F, F\cap [t+1, k+1]\neq \emptyset\} \cup \{[k+1]\setminus \{i\}: i \in [t]\}.\]

\[\cA(n, k, t):= \{F \subset \binom{[n]}{k}: |F\cap [t+2]| \ge t+1\}.\]

In the case $t=1$, Hilton and Milner determined the optimal nontrivial intersecting families. 

\begin{theorem}[Hilton-Milner~\cite{HM}]\label{hmthm}
Let $\cF \subset \binom{[n]}{k}$ be a nontrivial intersecting family with $n > 2k$. Then, 
\[|\cF| \le \binom{n-1}{k-1} - \binom{n-k-1}{k-1} + 1.\]
If $k > 3$, equality holds only if $\cF \cong \cH(n, k, 1)$. If $k=3$, equality only holds if $\cF\cong \cH(n, 3, 1)$ or $\cF\cong \cA(n, 3, 1)$.
\end{theorem}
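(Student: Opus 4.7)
The plan is to prove Theorem~\ref{hmthm} via Frankl's shifting method. First I would apply the compression operations $S_{ij}$ (for $i < j$) to $\cF$ repeatedly; each such shift preserves both the intersecting property and $|\cF|$, and the process terminates at a \emph{left-compressed} family. The delicate point is that a shift can turn a nontrivial intersecting family into a trivial one, so one must separately handle the last step at which nontriviality is preserved, or argue by induction on the number of shifts performed.

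In the main case where $\cF$ is left-compressed and still nontrivial intersecting, I claim $F_0 := \{2, 3, \ldots, k+1\} \in \cF$. Indeed, by nontriviality there exists some $F \in \cF$ with $1 \notin F$, and by left-compression one may assume $F = \{2, \ldots, k+1\}$. Partition $\cF = \cF_1 \sqcup \cF_1^c$ according to whether $1$ belongs to the edge. Every edge of $\cF_1$ must meet $F_0$ by the intersecting property, which immediately gives $|\cF_1| \le \binom{n-1}{k-1} - \binom{n-k-1}{k-1}$ by counting the $k$-sets through $1$ that avoid $[2, k+1]$.

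The core difficulty is to show $|\cF_1^c| \le 1$, and this is where the hypothesis $n > 2k$ enters. For any $G \in \cF_1^c$ distinct from $F_0$, the edge $G$ must meet every edge of $\cF_1$; pairing this with the left-compressed structure of $\cF_1$ and counting edges of $\cF_1$ that meet $F_0$ in prescribed positions, one forces a contradiction unless $G = F_0$. This is the main obstacle of the proof, and the step that most genuinely uses the size bound on $n$. Combined with the bound on $|\cF_1|$, this yields the Hilton-Milner inequality.

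For the equality characterization, tracing the argument at the boundary forces $\cF_1$ to be exactly the set of $k$-edges through $1$ that meet $[2, k+1]$ and $\cF_1^c = \{F_0\}$, giving $\cH(n, k, 1)$. For $k = 3$, however, the bound is also attained by $\cA(n, 3, 1) = \{F : |F \cap [3]| \ge 2\}$: at $k = 3$ the intersection constraints among edges in $\cF_1^c$ are loose enough that a triangle-like configuration on $\{1, 2, 3\}$ also produces a nontrivial intersecting family of the same size, so the uniqueness step requires an additional direct case check to recover both extremal families. I expect this case split at $k=3$, together with the tight bound $|\cF_1^c| \le 1$, to be the most delicate part of the argument.
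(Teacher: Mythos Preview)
The paper does not give its own proof of Theorem~\ref{hmthm}; the Hilton--Milner theorem is simply quoted as a classical result from~\cite{HM} and then used (via its $t$-intersecting generalisation, Theorem~\ref{thm:inthm}) as input to Proposition~\ref{prop:easyubekr}. So there is nothing in the paper to compare your argument against.

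On its own merits, your sketch has a genuine gap. The central claim---that in a left-compressed nontrivial intersecting family one has $|\cF_1^c| \le 1$---is false, and not only when $k=3$. For every $k \ge 3$ the family $\cA(n,k,1) = \{F \in \binom{[n]}{k} : |F \cap [3]| \ge 2\}$ is left-compressed, nontrivial, and intersecting, yet the subfamily avoiding $1$ is $\{F : \{2,3\} \subset F\}$, of size $\binom{n-3}{k-2} \gg 1$. For $k \ge 4$ this family happens not to be extremal, but it still shows that your step ``force $G = F_0$ for every $G \in \cF_1^c$'' cannot go through as written: there is no contradiction to derive, because such families exist.

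The shifting route to Hilton--Milner is workable, but it does not proceed by proving $|\cF_1^c| \le 1$. What one actually needs is a joint bound on $|\cF_1| + |\cF_1^c|$, typically obtained by building an injection from $\cF_1^c \setminus \{F_0\}$ into the $k$-sets through $1$ that meet $[2,k+1]$ but are \emph{missing} from $\cF_1$ (an exchange/cross-intersecting argument on the shifted pair $(\cF_1, \cF_1^c)$). That injection is the real engine of the proof, and it is absent from your outline; without it the inequality does not follow. Your handling of the ``shift destroys nontriviality'' issue is also left vague, but that is a secondary concern compared to the incorrect bound on $\cF_1^c$.
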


The $t$-intersecting Hilton-Milner theorem was proved by Frankl~\cite{F1978} for large $n$ and by Ahlswede and Khachatrian~\cite{AK2} for the optimal range $n > (t+1)(k-t+1)$. 

\begin{theorem}[$t$-intersecting Hilton-Milner]\label{thm:inthm}
Let $n, k, t$ be integers and let $\cF \subset \binom{[n]}{k}$ be a nontrivial $t$-intersecting family. If $n > (t+1)(k-t+1)$, then 
\[|\cF| \le \max\{|\cA(n, k, t)|, |\cH(n, k, t)|\}.\]
Furthermore, equality holds only if $\cF \cong \max\{\cA(n, k, t), \cH(n, k, t)\}$, $k > 2t+1$, or $\cF \cong \cA(n, k, t)$, $k \le 2t+1$. 
\end{theorem}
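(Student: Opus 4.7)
The plan is to prove Theorem~\ref{thm:inthm} by the shifting argument of Frankl~\cite{F1978}, which gives the bound for all sufficiently large $n$. First I would verify that the standard compression $S_{ij}$ with $i<j$ preserves both the $t$-intersecting property and $|\cF|$, so that without loss of generality $\cF$ is shifted. The delicate point is that a compression may collapse a nontrivially $t$-intersecting family into a $t$-star; I would handle such intermediate steps by hand, showing that any family which becomes trivially $t$-intersecting after a single shift already satisfied $|\cF| \le \max\{|\cA(n,k,t)|, |\cH(n,k,t)|\}$ beforehand.

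Next, with $\cF$ shifted and nontrivially $t$-intersecting, I would choose $A^\ast \in \cF$ with $[t] \not\subset A^\ast$ that is minimal in colex order. Combining shiftedness with the $t$-intersecting condition applied to $A^\ast$ and to generic sets of the form $[t] \cup X$ forces $|A^\ast \cap [t]| = t-1$ and $A^\ast \subseteq [k+1]$, so $A^\ast = [k+1] \setminus \{j\}$ for some $j \in [t]$. Partition $\cF = \cF_1 \sqcup \cF_2$ with $\cF_1 = \{F \in \cF : [t] \subset F\}$. Every $F \in \cF_1$ must meet $A^\ast \setminus [t] = [t+1,k+1]$, giving
\[|\cF_1| \le \binom{n-t}{k-t} - \binom{n-k-1}{k-t}.\]
The sets in $\cF_2$ each contain at least $t-1$ elements of $[t]$ and must $t$-intersect every other member of $\cF$, which together with shiftedness confines them either to $\{[k+1] \setminus \{i\} : i \in [t]\}$, yielding $|\cF| \le |\cH(n,k,t)|$, or to $k$-sets $F$ with $|F \cap [t+2]| \ge t+1$, yielding $|\cF| \le |\cA(n,k,t)|$. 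Comparing the two candidates (the crossover occurs at $k = 2t+1$) identifies the maximum, and uniqueness follows by tracking equality through the compressions and through the choice of $A^\ast$.

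The main technical obstacle is pushing $n$ down to the sharp threshold $n > (t+1)(k-t+1)$ proved by Ahlswede and Khachatrian~\cite{AK2}. The elementary shifting approach sketched above only works for $n \ge n_0(k,t)$; attaining the optimal range requires their \emph{generating sets} and pushing/pulling framework, which is substantially more involved than shifting alone. For the $\ell_2$-norm applications pursued in this paper, however, Frankl's large-$n$ version is already enough, so in practice the full Ahlswede-Khachatrian refinement can be invoked as a black box.
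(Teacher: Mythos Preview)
The paper does not supply its own proof of Theorem~\ref{thm:inthm}. It is quoted as a known result, attributed to Frankl~\cite{F1978} for sufficiently large $n$ and to Ahlswede and Khachatrian~\cite{AK2} for the sharp range $n > (t+1)(k-t+1)$, and is then used as a black box in the proof of Proposition~\ref{prop:easyubekr}. So there is no in-paper argument to compare your proposal against.

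Your sketch is a reasonable outline of Frankl's shifting proof for the large-$n$ regime, and you correctly flag that pushing $n$ down to the sharp threshold requires the Ahlswede--Khachatrian generating-set machinery rather than elementary compressions. Since the paper only needs the theorem as a citation (and indeed only in the large-$n$ regime, as you note), your proposal is more than what the paper itself undertakes; but it is not a proof the paper gives or omits --- it simply lies outside the scope of what the paper proves.
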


It is easy to verify that for large $n$
\[|\cA(n, k, t)| \sim (t+2)\binom{n}{k-t-1};\, |\cH(n, k, t)| \sim (k-t+1)\binom{n}{k-t-1}.\]


\begin{proposition}\label{prop:easyubekr}
Let $n, k, t$ be integers with $ k > t > 0$ and let $\cF \subset \binom{[n]}{k}$ be a $t$-intersecting family. Then, there is an integer $n_0(k, t)$ such that for $n \ge n_0(k, t)$, 

\[\co(\cF) \le \binom{n-t}{k-t}(1+(n-k+1)(k-t)),\]
with equality only if $\cF \cong \{F\in \binom{[n]}{k} : [t]\subset F\}$. 
\end{proposition}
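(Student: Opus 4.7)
The plan is to split into the cases where $\cF$ is trivially versus nontrivially $t$-intersecting and handle each using a different tool.

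First suppose $\cF$ is trivially $t$-intersecting, meaning there is a $t$-set $T$ contained in every $F \in \cF$. Then after relabelling, $\cF \subset S_k^t$. Since the codegree function is monotone in the family (for every $(k-1)$-set $E$, $d_{\cF}(E)\le d_{S_k^t}(E)$), and all codegrees are nonnegative, squaring and summing gives $\co(\cF)\le \co(S_k^t)$, which by Lemma~\ref{lem:starcd} is the claimed upper bound. Equality forces $d_{\cF}(E)=d_{S_k^t}(E)$ for every $E$, hence $\cF = S_k^t$.

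Now suppose $\cF$ is nontrivially $t$-intersecting. The strategy here is to show $|\cF|$ is much smaller than $\binom{n-t}{k-t}$, and then feed this into Bey's inequality. By Theorem~\ref{thm:inthm} (the $t$-intersecting Hilton-Milner theorem), for $n > (t+1)(k-t+1)$,
\[|\cF| \le \max(|\cA(n,k,t)|,|\cH(n,k,t)|) = O(n^{k-t-1}),\]
using the asymptotics $|\cA(n,k,t)|\sim (t+2)\binom{n}{k-t-1}$ and $|\cH(n,k,t)|\sim (k-t+1)\binom{n}{k-t-1}$ recorded in the text. Applying Bey's inequality (Theorem~\ref{thm:bey}) with $\ell = k-1$ then yields
\[\co(\cF)\le \frac{k}{\binom{n-1}{k-1}}|\cF|^2 + (k-1)(n-k)|\cF| = O(n^{k-2t-1}) + O(n^{k-t}) = O(n^{k-t}).\]
On the other hand, Lemma~\ref{lem:starcd} gives $\co(S_k^t) = \Theta(n^{k-t+1})$. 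Therefore, for all $n$ exceeding some threshold $n_0(k,t)$, the nontrivial case is strictly dominated by the $t$-star bound.

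Combining the two cases proves the proposition, and the equality analysis (only the trivial case reaches $\co(S_k^t)$, and within that case only $S_k^t$ itself does) gives the desired uniqueness of the extremizer. The main obstacle is not a subtle computation but rather the need to invoke a strong enough stability result to kill the nontrivial case: Theorem~\ref{thm:bey} alone would only give $|\cF|\le \binom{n-t}{k-t}$ and hence $\co(\cF) = O(n^{k-t+1})$, which matches the star up to a constant — so one really does need the $t$-intersecting Hilton-Milner bound to drop $|\cF|$ by a full factor of $n$, which is exactly what makes Bey's inequality tight enough for the conclusion.
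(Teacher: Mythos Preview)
Your argument is correct and follows the same approach as the paper: split into the trivial case (handled by monotonicity of codegrees under inclusion) and the nontrivial case (handled by combining the $t$-intersecting Hilton--Milner bound with Bey's inequality to get $\co(\cF)=O(n^{k-t})$, strictly below $\co(S_k^t)=\Theta(n^{k-t+1})$). One tiny slip in your closing commentary: when you write ``Theorem~\ref{thm:bey} alone would only give $|\cF|\le \binom{n-t}{k-t}$'' you mean the $t$-intersecting Erd\H{o}s--Ko--Rado theorem, not Bey's inequality.
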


\begin{proof}
The statement is immediate if $\cF \subset S_k^t$. Suppose that $\cF$ is a nontrivial $t$-intersecting family. Theorem~\ref{thm:bey} and Theorem~\ref{thm:inthm} imply that for large enough $n$, 
\[\co(\cF) \le \max(t+2, k-t+1)\cdot(k-1)(k-t)\binom{n}{k-t} + O(n^{k-t-1}),\]
which is much smaller than $\binom{n-t}{k-t}(t+(n-k+1)(k-t))$ for large $n$. 
\end{proof}

A family of sets $\cF$ is \text{$d$-wise $t$-intersecting} if for any $d$ sets $F_1, F_2\ldots, F_d \in \cF$, $|F_1\cap \cdots \cap F_d| \ge t$. A proof similar to the one for Proposition~\ref{prop:easyubekr} using the analogous Hilton-Milner theorem for ($d$-wise) $t$-intersecting families~\cite{BL, OV} shows that the $t$-stars also have maximum $\ell_2$-norm among all $d$-wise $t$-intersecting families for sufficiently large $n$. 

\begin{proposition}\label{prop:easyubdtekr}
Let $\cF \subset \binom{[n]}{k}$ be a $d$-wise $t$-intersecting family for integers $d\ge 2$ and $t\ge 1$. Then, there is an integer $n_0(k, t, d)$ such that if $n \ge n_0(k, t, d)$, we have
\[\co(\cF) \le \binom{n-t}{k-t}(1+(n-k+1)(k-t)),\]
with equality only if $\cF \cong \{F\in \binom{[n]}{k} : [t]\subset F\}$. 
\end{proposition}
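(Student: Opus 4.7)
The plan is to mimic the proof of Proposition~\ref{prop:easyubekr}, replacing the ordinary $t$-intersecting Hilton-Milner theorem (Theorem~\ref{thm:inthm}) with its analogue for $d$-wise $t$-intersecting families due to Balogh-Linz~\cite{BL} and O'Neill-Verstra\"ete~\cite{OV}. I would first split into two cases depending on whether $\cF$ is trivially $d$-wise $t$-intersecting, meaning $|\bigcap_{F\in\cF}F|\ge t$, or nontrivially so.

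In the trivial case, after relabeling, $\cF$ is a subfamily of the $t$-star $S_k^t=\{F\in\binom{[n]}{k}:[t]\subset F\}$. Since $\co$ is monotone under adding edges, we get $\co(\cF)\le \co(S_k^t)$, which by Lemma~\ref{lem:starcd} equals $\binom{n-t}{k-t}(t+(n-k+1)(k-t))$, matching the claimed upper bound, with equality only when $\cF=S_k^t$. In the nontrivial case, the $d$-wise $t$-intersecting Hilton-Milner bound from~\cite{BL, OV} yields $|\cF|=O_{k,t,d}(n^{k-t-1})$; this is because the extremal nontrivial families identified there are structurally similar to $\cA(n,k,t)$ and $\cH(n,k,t)$ and have size of this order. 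Substituting into Theorem~\ref{thm:bey},
\[
\co(\cF) \le \frac{k}{\binom{n-1}{k-1}}|\cF|^2 + (k-1)(n-k)|\cF| = O(n^{k-t}),
\]
which is smaller by a factor of $n$ than the $t$-star value $\Theta(n^{k-t+1})$. Hence for $n$ large in terms of $k$, $t$, and $d$, the nontrivial case is strictly inferior to the trivial one and cannot attain equality.

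The only substantive obstacle is locating the precise quantitative statement of the $d$-wise $t$-intersecting Hilton-Milner theorem in~\cite{BL, OV} and verifying that the extremal nontrivial families indeed have size $O(n^{k-t-1})$ in the relevant range of $n$. Once that input is in hand, the remainder is a direct application of Bey's inequality together with an order-of-magnitude comparison against Lemma~\ref{lem:starcd}, exactly as in Proposition~\ref{prop:easyubekr}.
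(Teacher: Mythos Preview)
Your proposal is correct and follows exactly the approach the paper indicates in the sentence preceding the proposition: split into the trivial case $\cF\subset S_k^t$ (handled by monotonicity of $\co$ and Lemma~\ref{lem:starcd}) and the nontrivial case, where the $d$-wise $t$-intersecting Hilton--Milner theorem from~\cite{BL, OV} gives $|\cF|=O_{k,t,d}(n^{k-t-1})$, and then feed this into Bey's inequality just as in the proof of Proposition~\ref{prop:easyubekr}. The paper does not write out a separate proof beyond that one sentence, so your sketch is already more detailed than what appears there.
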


In the case $t=1$, Frankl~\cite{F1976} proved that for $n \ge dk/(d-1)$, the maximum-size $d$-wise intersecting family is the star $S_k^1$. Theorem~\ref{thm:bey} extends Frankl's result to the $\ell_2$-norm as an immediate corollary. 

\begin{theorem}\label{thm:dwiseintl2}
Let $\cF \subset \binom{[n]}{k}$ be a $d$-wise intersecting family with $n\ge \frac{d}{d-1}\,k$. Then, 
\[\co(\cF) \le \binom{n-1}{k-1}(1+(n-k+1)(k-1)). \]
\end{theorem}

We conjecture that the $t$-stars have maximum codegree squared sum in the same range that they are maximal in the classical $t$-intersecting Erd\H{o}s-Ko-Rado theorem. 

\begin{conjecture}\label{conj:exacttint}
Let $\cF \subset \binom{[n]}{k}$ be a $t$-intersecting family for an integer $t\ge 1$. Then, if $n \ge (t+1)(k-t+1)$, we have
\[\co(\cF) \le \binom{n-t}{k-t}(1+(n-k+1)(k-t)),\]
with equality for $n > (t+1)(k-t+1)$ only if $\cF \cong \{F\in \binom{[n]}{k} : [t]\subset F\}$. 
\end{conjecture}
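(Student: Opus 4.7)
The plan is to mirror the proof of Proposition~\ref{prop:easyubekr}, splitting $\cF$ into trivial and nontrivial $t$-intersecting cases but pushing the argument down to the full conjectured threshold $n \ge (t+1)(k-t+1)$. If $\cF$ is contained (after relabeling) in the $t$-star $S_k^t$, then $d_\cF(E) \le d_{S_k^t}(E)$ for every $(k-1)$-set $E$, and pointwise monotonicity of $\co$ gives $\co(\cF) \le \co(S_k^t)$ with equality iff $\cF = S_k^t$. The content therefore lies in the nontrivial case.

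For nontrivial $\cF$, my first attempt would combine Theorem~\ref{thm:inthm} with Theorem~\ref{thm:bey}: Hilton--Milner yields $|\cF| \le \max(|\cA(n, k, t)|, |\cH(n, k, t)|) = O(n^{k-t-1})$ throughout $n > (t+1)(k-t+1)$, and Bey's inequality then bounds $\co(\cF)$ by $O(n^{k-t})$, far below $\co(S_k^t) = \Theta(n^{k-t+1})$ for large $n$, reproducing Proposition~\ref{prop:easyubekr}. The obstruction is the low-$n$ range: Bey's inequality is genuinely lossy for $t \ge 2$ (plugging $|\cF| = \binom{n-t}{k-t}$ into it already overshoots $\co(S_k^t)$ by a leading factor of $(k-1)/(k-t)$), so the Hilton--Milner plus Bey bound need not beat $\co(S_k^t)$ near the threshold.

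To handle the small-$n$ range I would introduce a shifting reduction. The standard $(i, j)$-shift preserves $t$-intersectingness and $|\cF|$, and small-case computations (for example with $k = 3$ and $t = 1$ on a handful of vertices) strongly suggest it also does not decrease $\co$. Assuming this monotonicity one may pass to the shift-closure of $\cF$, and then by Ahlswede and Khachatrian's structural analysis a shifted nontrivial $t$-intersecting family on $[n]$ with $n \ge (t+1)(k-t+1)$ is contained in one of the explicit families $\cF_i = \{F \in \binom{[n]}{k} : |F \cap [t+2i]| \ge t+i\}$ with $i \ge 1$. The $(k-1)$-codegrees of $\cF_i$ depend only on the intersection pattern of the $(k-1)$-set with $[t+2i]$, so $\co(\cF_i)$ can be written out in closed form, and the remaining task is a finite comparison showing $\co(\cF_i) < \co(\cF_0) = \co(S_k^t)$ for every $i \ge 1$ and every $n > (t+1)(k-t+1)$, in the same spirit as the size comparison in the classical Ahlswede--Khachatrian theorem. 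At the threshold $n = (t+1)(k-t+1)$ one expects equality to occur also for $\cF_1$, consistent with the uniqueness statement in the conjecture.

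The principal obstacle I anticipate is establishing that $\co$ is non-decreasing under $(i, j)$-shifts. For each edge shifted from $F$ to $(F \setminus \{j\}) \cup \{i\}$, exactly $k-1$ codegrees drop by one and $k-1$ rise by one, but many edges shift simultaneously and their contributions interact, so tracking the net change in $\sum_E d(E)^2$ requires a careful pairing across all shifted edges. Should this monotonicity resist a clean proof, my fallback would be to refine Bey's inequality for $t$-intersecting families, recovering the correct leading constant $(k-t)$ in place of $(k-1)$ by folding the $t$-intersecting constraint into de Caen's original convexity argument; this would itself be a substantial combinatorial task and is where I expect the real difficulty of the conjecture to concentrate.
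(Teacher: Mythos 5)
This statement is left as an open conjecture in the paper: the author proves only the weaker Proposition~\ref{prop:easyubekr}, valid for $n \ge n_0(k,t)$, precisely because (as you correctly diagnose) Bey's inequality loses a factor of roughly $(k-1)/(k-t)$ on the leading term and so cannot reach the threshold $n \ge (t+1)(k-t+1)$ when $t \ge 2$. So there is no paper proof to compare against, and your proposal should be judged as a proof attempt on its own; as such it has two genuine gaps beyond the one you flag. First, the shift-monotonicity of $\co$ is not established but merely conjectured from small cases, and it is exactly the interaction between simultaneously shifted edges that makes the net change in $\sum_E d(E)^2$ hard to control; without this lemma the entire reduction to shifted families collapses, and your fallback (a $t$-intersecting refinement of de Caen's convexity argument) is likewise only named, not carried out.

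Second, the structural step is wrong as stated: a shifted nontrivial $t$-intersecting family need \emph{not} be contained in a single Frankl family $\cF_i = \{F : |F\cap[t+2i]| \ge t+i\}$ with $i \ge 1$. Frankl's lemma only guarantees that each individual member $F$ satisfies $|F\cap[t+2i]| \ge t+i$ for some $i$ depending on $F$, i.e.\ $\cF \subseteq \bigcup_i \cF_i$. For example, the Hilton--Milner family $\cH(n,k,1)$ (suitably relabeled so as to be shifted) contains sets such as $\{1,4,5,\dots\}$ meeting $[3]$ in only one element, so it lies in no single $\cF_i$ with $i\ge 1$. Consequently the ``finite comparison'' $\co(\cF_i) < \co(\cF_0)$ would not cover all shifted nontrivial families, even granting shift-monotonicity and the pointwise bound $\co(\cF) \le \co(\cF_i)$ for subfamilies. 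The Ahlswede--Khachatrian argument circumvents this via generating sets and a delicate pushing--pulling analysis tailored to cardinality, and adapting that machinery to the quadratic functional $\co$ (together with the equality analysis, which shifting further obscures since a nontrivial family can become trivial after shifts) is precisely the open content of the conjecture; your outline does not yet supply it.
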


It would be interesting to prove a version of the Hilton-Milner theorem for the $\ell_2$-norm. We first handle the case $k=3$. 

\begin{proposition}\label{prop:3unifprop}
    Let $\cF \subset \binom{[n]}{3}$ be a nontrivial intersecting family with $n\ge 7$. Then, 
    \[\text{co}_2(\cF) \le \text{co}_2(\cH(n, 3, 1)), \]
    and equality holds if and only if $\cF \cong \cH(n, 3, 1)$ or $\cF \cong \cA(n, 3, 1)$. 
\end{proposition}

\begin{proof}
We first compute $\text{co}_2(\cH(n, 3, 1))$ and $\text{co}_2(\cA(n, 3, 1))$. For $\cH(n, 3, 1)$, the $2$-sets $\{1, 2\}$, $\{1, 3\}$, and $\{1, 4\}$ have degree $n-2$, the sets $\{1, a\}$, $5\le a\le n$ have degree $3$, the sets $\{2, 3\}, \{2, 4\}, \{3, 4\}$ have degree $2$, the sets $\{2, a\}, \{3, a\}, \{4, a\}$, $5\le a\le n$ each have degree $1$  and the sets $\{a, b\} \subset \binom{[5, n]}{2}$ each have degree $0$. Thus, 
\[\text{co}_2(\cH(n, 3, 1)) = 3(n-2)^2 + 3^2(n-4) + 3(1)^2(n-4) + 2^2(3) = 3n^2-24.\]

For $\cA(n, 3, 1)$, the sets $\{1, 2\}, \{1, 3\}, \{2, 3\}$ each have degree $n-2$, the sets $\{1, a\}, \{2, a\}, \{3, a\}$, $4\le a\le n$ each have degree $2$, and the sets $\{a, b\} \subset \binom{[4, n]}{2}$ each have degree $0$, so
\[\text{co}_2(\cA(n, 3, 1)) = 3(n-2)^2+3(2)^2(n-3)=3n^2-24.\]

Polycn and Ruci\'nski~\cite[Theorem 4]{PR} classified all maximal intersecting $3$-graphs for $n\ge 7$. We can exhaustively check all of the possible families to see that all other maximal $3$-uniform intersecting families have smaller codegree squared sum than $\cH(n, 3, 1)$ and $\cA(n, 3, 1)$ for $n\ge 7$. 
\end{proof}

 For $k\ge 4$, we conjecture that $\cH(n, k, 1)$ has the maximum $\ell_2$-norm among all nontrivial intersecting families. 

\begin{conjecture}\label{con:hm}
Let $\cF \subset \binom{[n]}{k}$ be a nontrivial intersecting family. Then, for $k\ge 3$ and $n > 2k$, 
\[\co(\cF) \le \co(\cH(n, k, 1)),\]
with equality if and only if $\cF \cong \cH(n, k, 1)$ if $k\ge 4$, and if and only if $\cF \cong \cH(n, 3, 1)$ or $\cF\cong \cA(n, 3, 1)$ if $k=3$.

If $n=2k$, then 
\[\co(\cF) \le k^2\binom{2k-1}{k-1},\]
and equality holds only if $\cF \cong \binom{[2k-1]}{k}$. 
\end{conjecture} 

The case $n=2k$ is implied by the $\ell_2$-norm Erd\H{o}s-Ko-Rado theorem. We can prove Conjecture~\ref{con:hm} for large $n$ by using Bey's inequality and known stability results for the Hilton-Milner theorem. We omit the details, as the goal should be to prove the conjecture for $n > 2k$.

\section{An upper bound on \texorpdfstring{$\sigma$}{} for general hypergraphs}

Balogh, Clemen and Lidick\'y~\cite{BCL2} proved that $\sigma(\cF) \le \pi(\cF)$ for any $k$-uniform hypergraph $\cF$. Theorem~\ref{thm:bey} implies a general upper bound on $\sigma(\cF)$ for any $k$-uniform hypergraph $\cF$ in terms of the Tur\'an density $\pi(\cF)$, which is an improvement over the previous bound when $0 < \pi(\cF) < 1$. 

\begin{theorem}[General bound on $\sigma(\cF)$]\label{thm:genl2upperbd}
Let $\cF$ be a $k$-uniform hypergraph with Tur\'an density $\pi(\cF) > 0$. Then, we have that 
\[\sigma(\cF) \le \pi(\cF) \left(\frac{\pi(\cF)}{k} + 1 - \frac{1}{k}\right).\]
\end{theorem}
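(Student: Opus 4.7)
The plan is to combine Bey's inequality (Theorem~\ref{thm:bey}) with the bound on the edge count forced by the Tur\'an density, and then normalize by $\binom{n}{k-1}(n-k+1)^2$ before passing to the limit in the definition of $\sigma(\cH)$. For any $\cH$-free $k$-uniform hypergraph $G$ on $n$ vertices with edge set $E$, the definition of $\pi(\cH)$ gives $|E| \le (\pi(\cH) + o(1))\binom{n}{k}$ as $n \to \infty$, and Theorem~\ref{thm:bey} with $\ell = k-1$ yields
\[\co(G) \le \frac{k}{\binom{n-1}{k-1}}|E|^2 + (k-1)(n-k)|E|.\]

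Next I would compute the asymptotic contribution of each term after dividing by $\binom{n}{k-1}(n-k+1)^2$. Using the identities $\binom{n-1}{k-1}/\binom{n}{k-1} = (n-k+1)/n$ and $\binom{n}{k}/\binom{n}{k-1} = (n-k+1)/k$, one computes for the quadratic term
\[\frac{k\binom{n}{k}^2}{\binom{n-1}{k-1}\binom{n}{k-1}(n-k+1)^2} = \frac{n}{k(n-k+1)} \longrightarrow \frac{1}{k},\]
so this term contributes $\pi(\cH)^2/k$ in the limit. For the linear term,
\[\frac{(k-1)(n-k)\binom{n}{k}}{\binom{n}{k-1}(n-k+1)^2} = \frac{(k-1)(n-k)}{k(n-k+1)} \longrightarrow \frac{k-1}{k},\]
which contributes $\pi(\cH)(1 - 1/k)$. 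Summing the two limits and factoring $\pi(\cH)$ gives the claimed bound.

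There is no substantive obstacle: the argument is a direct specialization of Bey's inequality at $\ell = k-1$ followed by routine asymptotic estimates of binomial ratios. The only care needed is in checking that the $o(1)$ errors coming from the Tur\'an density bound and from the binomial ratios do not contribute to the limit, which is immediate since both normalized terms converge to finite values as $n \to \infty$.
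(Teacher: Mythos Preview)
Your proposal is correct and follows essentially the same approach as the paper: apply Bey's inequality at $\ell=k-1$ to an $\cH$-free hypergraph, plug in the bound $|E|\le(\pi(\cH)+o(1))\binom{n}{k}$, normalize by $\binom{n}{k-1}(n-k+1)^2$, and take the limit. Your use of exact binomial identities to compute the two limits is slightly cleaner than the paper's appeal to $\binom{n}{k}\sim n^k/k!$, but the argument is the same.
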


\begin{proof}[Proof of Theorem~\ref{thm:genl2upperbd}]
By definition, $|\cF| \le (\pi(\cF)+o(1))\binom{n}{k}$. Therefore, Theorem~\ref{thm:bey} implies
\[
\sum_{E\in \binom{V(\cF)}{k-1}}d(E)^2 \le \frac{k}{\binom{n-1}{k-1}}\left((\pi(\cF)+o(1))\binom{n}{k}\right)^2 + (k-1)(n-k)(\pi(\cF)+o(1))\binom{n}{k}.
\]
Dividing through by $\binom{n}{k-1}(n-k+1)^2$ and using that $\binom{n}{k} \sim \frac{n^k}{k!}$ for $k$ fixed and $n\rightarrow \infty$, we obtain
\[\sigma(\cF) \le \frac{1}{k}\pi(\cF)^2 + \frac{k-1}{k}\pi(\cF) =  \pi(\cF)\left(\frac{\pi(\cF)}{k} + 1 - \frac{1}{k}\right).\]
\end{proof}

 For specific hypergraphs $\cF$, this upper bound seems to be rather weak, because it is a general upper bound on the Kleitman-West problem for all hypergraphs $\cF$ with $|\cF| = (\pi(\cF)+o(1))\binom{n}{k}$.  Balogh, Clemen and Lidick\'y~\cite{BCL1, BCL2} used flag algebras to determine $\sigma$ asymptotically for a number of $3$-uniform hypergraphs, and upper bounds for other hypergraphs. The bounds given by Theorem~\ref{thm:genl2upperbd} are much worse than the bounds obtained by flag algebras computations.  On the other hand, Theorem~\ref{thm:genl2upperbd} works for any non-$k$-partite $k$-uniform hypergraph. 

Let $\mathbb{F}$ be the Fano plane. Balogh, Clemen and Lidick\'y were unable to use flag algebras to improve over the trivial upper bound for $\sigma(\mathbb{F})$.  Using the fact that $\pi(\mathbb{F}) = \frac34$~\cite{dCF}, Theorem~\ref{thm:genl2upperbd} implies the following upper bound for $\sigma(\mathbb{F})$. 

\begin{proposition}\label{prop:fano}
\[\sigma(\mathbb{F}) \le \frac{11}{16}.\]
\end{proposition}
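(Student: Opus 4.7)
The plan is essentially to invoke Theorem~\ref{thm:genl2upperbd} as a black box with the two ingredients furnished in the surrounding text: the Fano plane $\mathbb{F}$ is $3$-uniform, so $k=3$, and its Tur\'an density is known to equal $\pi(\mathbb{F}) = 3/4$ by the result of de Caen and F\"uredi cited in the excerpt. No separate combinatorial argument or stability statement is needed, since the general upper bound already packages all of the use of Bey's inequality.

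Concretely, I would substitute $k=3$ and $\pi(\mathbb{F})=3/4$ into the right-hand side of Theorem~\ref{thm:genl2upperbd} and evaluate:
\[
\sigma(\mathbb{F}) \le \pi(\mathbb{F})\left(\frac{\pi(\mathbb{F})}{k} + 1 - \frac{1}{k}\right) = \frac{3}{4}\left(\frac{1}{4} + \frac{2}{3}\right) = \frac{3}{4}\cdot \frac{11}{12} = \frac{11}{16}.
\]
There is no real obstacle here; the only thing to verify carefully is the arithmetic $\frac14+\frac23 = \frac{11}{12}$, after which the conclusion follows immediately. If anything, the content of the statement lies in pointing out that Theorem~\ref{thm:genl2upperbd} gives a nontrivial estimate in a case where flag algebra computations (as noted by Balogh, Clemen and Lidick\'y) did not beat the trivial bound $\sigma(\mathbb{F})\le \pi(\mathbb{F}) = 3/4$, rather than in any further proof work.
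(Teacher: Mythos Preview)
Your proposal is correct and matches the paper's approach exactly: the paper simply notes that $\pi(\mathbb{F}) = \tfrac34$ and states the proposition as an immediate consequence of Theorem~\ref{thm:genl2upperbd}, with no additional argument. Your substitution and arithmetic are right.
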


Let $K_{t}^k$ be the complete $k$-uniform hypergraph on $t$ vertices. The best known general upper bound on $\pi(K_{t}^k)$ was proved by de Caen~\cite{dC1983}. 

\begin{theorem}[de Caen~\cite{dC1983}]\label{thm:genkt}
For any integers $t > k \ge 2$, 
\[\pi(K_{t}^k) \le 1 - \frac{1}{\binom{t-1}{k-1}}.\]
\end{theorem}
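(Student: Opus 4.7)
The plan is to bound the number of non-edges of a $K_t^k$-free hypergraph by a double-counting argument on $t$-subsets of $[n]$, following de Caen's approach in~\cite{dC1983}. Let $H\subset\binom{[n]}{k}$ be $K_t^k$-free, and write $N:=\binom{n}{k}-|H|$ for its number of non-edges.

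As a first step, I would establish the weaker baseline bound $\pi(K_t^k)\le 1-1/\binom{t}{k}$ by counting pairs $(T,E)$ with $T\in\binom{[n]}{t}$ and $E\in\binom{T}{k}\setminus H$. Since $H$ is $K_t^k$-free, each $t$-subset contributes at least one such pair, giving $\ge\binom{n}{t}$ pairs from the $T$-side. Each non-edge $E$ lies in exactly $\binom{n-k}{t-k}$ distinct $t$-subsets, giving exactly $N\binom{n-k}{t-k}$ pairs from the $E$-side. Combining these bounds and using the identity $\binom{n}{t}/\binom{n-k}{t-k}=\binom{n}{k}/\binom{t}{k}$ yields $N\ge\binom{n}{k}/\binom{t}{k}$.

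The heart of the proof is to improve this by a factor of $t/k$ to reach the sharp bound $N\ge\binom{n}{k}/\binom{t-1}{k-1}$, via the identity $k\binom{t}{k}=t\binom{t-1}{k-1}$. My plan is to pass to a pointed count over triples $(U,S,v)$ with $U\in\binom{[n]}{t-1}$, $S\subset U$ of size $k-1$, and $v\in[n]\setminus U$ with $S\cup\{v\}\notin H$, and to exploit $K_t^k$-freeness in the following pointwise form: if $U$ spans $K_{t-1}^k$ in $H$, then for \emph{every} $v\notin U$ some $(k-1)$-subset $S\subset U$ must satisfy $S\cup\{v\}\notin H$---otherwise $U\cup\{v\}$ spans $K_t^k$, a contradiction. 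This forces the non-neighborhoods $\{v\notin S:S\cup\{v\}\notin H\}$ as $S$ ranges over $(k-1)$-subsets of $U$ to jointly cover $[n]\setminus U$, contributing $\ge n-t+1$ pointed non-edges per complete $(t-1)$-subset. A convexity/averaging argument on the triple count then upgrades the unweighted input ``$\ge 1$ non-edge per $t$-subset'' to the weighted statement needed to extract the extra factor $t/k$.

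The main obstacle is precisely the sharpening step: the naive count is slack by the factor $t/k$, and closing this gap requires exploiting that $K_t^k$ is \emph{complete}, so a single missing $k$-edge obstructs $\binom{t-1}{k-1}$ distinct pointed near-$K_t^k$ configurations inside an extending $t$-subset, rather than one. A further subtlety is the degenerate case in which $H$ contains few or no $K_{t-1}^k$-subgraphs, which would force an inductive step on $t$ using the fact that $K_{t-1}^k$-free implies $K_t^k$-free. Assembling the pointed counting, the structural constraint, and the inductive fallback into a single clean inequality is the substantive content of de Caen's~\cite{dC1983} argument, which my proposed proof would adapt.
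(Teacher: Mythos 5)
The statement you are proving is quoted in the paper as an external result (de Caen~\cite{dC1983}); the paper gives no proof of it, so your attempt stands or falls on its own. Your first step is correct and complete: counting pairs $(T,E)$ with $T\in\binom{[n]}{t}$ and $E\in\binom{T}{k}\setminus H$, together with the identity $\binom{n}{t}\binom{t}{k}=\binom{n}{k}\binom{n-k}{t-k}$, does give $N\ge\binom{n}{k}/\binom{t}{k}$ and hence $\pi(K_t^k)\le 1-1/\binom{t}{k}$. But that is the easy bound, not the theorem.

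The sharpening by the factor $t/k$, which is the entire content of the statement, is not actually carried out. Your pointwise observation is fine as far as it goes: if $U\in\binom{[n]}{t-1}$ spans a complete $K_{t-1}^k$ in $H$, then every $v\notin U$ witnesses at least one non-edge $S\cup\{v\}$ with $S\in\binom{U}{k-1}$, so such a $U$ generates at least $n-t+1$ pointed non-edges. However, this yields nothing unless you also have a quantitative lower bound on the number of complete $(t-1)$-subsets of $H$ in terms of $|H|$ (or a weighted count interpolating between complete and incomplete $(t-1)$-sets); that lower bound is precisely the Moon--Moser-type recursion between densities of complete sub-hypergraphs of consecutive orders that de Caen's paper establishes, and it is exactly the ingredient your sketch leaves as an unspecified ``convexity/averaging argument.'' Your proposed dichotomy does not close the gap either: the inductive fallback only applies when $H$ contains \emph{no} copy of $K_{t-1}^k$, whereas the problematic regime is a hypergraph with large edge density but relatively few complete $(t-1)$-sets, which neither branch of your argument controls. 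Until you state and prove the inequality relating the number of copies of $K_{t-1}^k$ (or the weighted pointed count over all $(t-1)$-sets, complete or not) to $|H|$ and $N$, the claimed bound $N\ge\bigl(1+o(1)\bigr)\binom{n}{k}/\binom{t-1}{k-1}$ is not established, so the proof has a genuine gap at its central step.
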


Theorems~\ref{thm:genl2upperbd} and~\ref{thm:genkt} immediately give a general upper bound on $\sigma(K_t^k)$. 

\begin{proposition}
For integers $t > k \ge 2$, 
\[\sigma(K_t^k) \le \left(1-\frac{1}{\binom{t-1}{k-1}}\right)\left(1-\frac{1}{k\binom{t-1}{k-1}}\right).\]
\end{proposition}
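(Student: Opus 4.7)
The plan is to combine Theorem~\ref{thm:genl2upperbd} and Theorem~\ref{thm:genkt} directly, with only a brief monotonicity check in between. Writing $q := 1/\binom{t-1}{k-1}$ to streamline notation, de Caen's theorem gives $\pi(K_t^k) \le 1 - q$, and the goal is to substitute this into the general upper bound on $\sigma(K_t^k)$.

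First I would apply Theorem~\ref{thm:genl2upperbd} with $\cH = K_t^k$, obtaining
\[\sigma(K_t^k) \le \pi(K_t^k)\left(\frac{\pi(K_t^k)}{k} + 1 - \frac{1}{k}\right).\]
To legitimately replace $\pi(K_t^k)$ on the right-hand side by the larger quantity $1 - q$, I need the single-variable function $f(p) := p\bigl(p/k + 1 - 1/k\bigr) = p^2/k + (1 - 1/k)p$ to be monotonically increasing on the relevant range. Its derivative is $f'(p) = 2p/k + (1 - 1/k)$, which is nonnegative for $k \ge 2$ and $p \ge 0$, so $f$ is indeed increasing on $[0,1]$.

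With monotonicity in hand, substituting $\pi(K_t^k) \le 1 - q$ and simplifying gives
\[\sigma(K_t^k) \le (1-q)\left(\frac{1-q}{k} + \frac{k-1}{k}\right) = (1-q) \cdot \frac{k - q}{k} = (1-q)\left(1 - \frac{q}{k}\right),\]
which is exactly the claimed bound once $q$ is unpacked to $1/\binom{t-1}{k-1}$. There is no real obstacle in this argument: the content is entirely absorbed into Theorems~\ref{thm:genl2upperbd} and~\ref{thm:genkt}, and the only step that could conceivably go wrong is the monotonicity of $f$, which is immediate from its derivative.
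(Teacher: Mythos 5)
Your proof is correct and matches the paper's approach: the paper states the bound as an immediate consequence of Theorem~\ref{thm:genl2upperbd} and Theorem~\ref{thm:genkt}, exactly the substitution you perform. Your brief monotonicity check of $p \mapsto p(p/k + 1 - 1/k)$ is the right (and only) detail needed to justify replacing $\pi(K_t^k)$ by de Caen's upper bound.
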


\section{Some Tur\'an-type results for graphs in the \texorpdfstring{$\ell_2$}{}-norm}

The inequalities of Bey and de Caen give bounds on the codegree squared sum $\co(\cF)$ solely in terms of the number of edges in $\cF$, which apart from cases like the $1$-star limits their use in Tur\'an problems for the codegree squared sum absent additional information, such as stability results for the corresponding Tur\'an problem in the $\ell_1$-norm. In this section, we show that in the graph case ($k=2$), the corresponding spectral extremal problem can allow us to quickly deduce exact or asymptotic results for the corresponding Tur\'an-type problem in the $\ell_2$-norm.  

Any upper bound on the spectral radius $\lambda_1$ for a graph $G$ also provides an upper bound for the number of edges $m$ in the graph $G$ via the well-known inequality $\lambda_1 \ge 2m/n$. However, the spectral radius also provides an upper bound on $\co(G)$, as proved by Hofmeister~\cite{Hof}. 

\begin{theorem}[Hofmeister]\label{thm:hofmineq}
Let $G$ be a graph on $n$ vertices with spectral radius $\lambda_1$. Then, 
\[\co(G) \le n\lambda_1^2.\]
\end{theorem}

Using Hofmeister's inequality, we can prove Theorem~\ref{thm:kkn-kgenthm}. 

\begin{proof}[Proof of Theorem~\ref{thm:kkn-kgenthm}]
Let $\sF$ be a family of graphs, such that $K_{k, n-k}$ maximizes $\lambda_1$ over all $\sF$-free graphs. Then, note that 
\[\co(K_{k, n-k}) = k(n-k)^2 + (n-k)k^2 = nk(n-k),\]
while $\lambda_1 = \sqrt{k(n-k)}$. Hofmeister's inequality now implies immediately that \[\exco(\sF) = nk(n-k)\] and $K_{k,n-k}$ is an extremal graph in the $\ell_2$-norm. 

Now, let $H$ be one of $K_k \vee \overline{K_{n-k}}$ or $K_k \vee (\overline{K_{n-k-2}} \cup K_2)$, and let $\sF$ be a family of graphs such that $H$ maximizes $\lambda_1$ over all $\sF$-free graphs. Observe that 
\[\co(H) = kn^2 + O(n),\]
while it can be shown (see \cite{Nik2010}) that
\[\lambda_1(K_k \vee \overline{K_{n-k}}) = (k-1)/2 + \sqrt{kn - (3k^2+2k-1)/4}\]
and 
\[\lambda_1(K_k \vee (\overline{K_{n-k-2}} \cup K_2)) = \lambda_1(K_k \vee \overline{K_{n-k}}) + O\left(\frac{1}{n+\sqrt{n}}\right).\]

Thus, in either case 
\[\lambda_1^2(H) = kn + O(\sqrt{n}),\]
so Hofmeister's inequality and the putative spectral Tur\'an result imply
\[\exco(\sF) = kn^2(1+o(1)).\]
\end{proof}

In fact, Theorem~\ref{thm:kkn-kgenthm} can be extended to determine asymptotic Tur\'an-type results for the sums of the $p$th powers of degrees of graphs. For a family of graphs $\sF$, let $t_p(n, \sF):= \max\{\sum_{i\in V(G)} d_i^p: \text{$G$ is a $\sF$-free graph on $n$ vertices.}\}$, where for a graph $G$, $d_v$ is the degree of vertex $v$ in $G$. Caro and Yuster~\cite{CY2000, CY2004} introduced the problem of determining $t_p(n, \sF)$ for different families of graphs $\sF$.  

\begin{theorem}\label{thm:pthpowers}
Suppose that the extremal graph for a spectral Tur\'an problem forbidding the family of graphs $\sF$ is one of $K_{k, n-k}$, $K_{k} \vee \overline{K_{n-k}}$ or $K_k \vee (\overline{K_{n-k-2}} \cup K_2)$. Then, for any $p \ge 2$, 
\[t_p(n, \sF) = kn^p(1+o(1)).\]
\end{theorem}

\begin{proof}[Proof of Theorem~\ref{thm:pthpowers}]
Let $H$ be one of $K_{k, n-k}$, $K_{k} \vee \overline{K_{n-k}}$ or $K_k \vee (\overline{K_{n-k-2}} \cup K_2)$. We have
\[\sum_{i\in V(H)}d_i^p = kn^p(1+o(1)).\]
By Theorem~\ref{thm:kkn-kgenthm}, we have $\exco(\sF) = kn^2(1+o(1))$, so for any graph $G$ which is $\sF$-free, 
\[\sum_{i\in V(G)}d_i^p = \sum_{i\in V(G)}d_i^{p-2}d_i^2 \le n^{p-2}\sum_{i\in V(G)}d_i^2 \le kn^p(1+o(1)),\]
completing the proof. 
\end{proof}

Caro and Yuster~\cite{CY2000} conjectured that $t_p(n, C_{2k}) = kn^p(1+o(1))$. Nikiforov~\cite{Nik2009} proved this conjecture, while Gerbner~\cite{Gerb1} recently gave a different short proof of the conjecture. Cioab\u{a}, Desai and Tait~\cite{CDT1} recently proved that $K_k \vee (\overline{K_{n-k-2}} \cup K_2)$ has maximum spectral radius over all graphs on $n$ vertices without a $C_{2k}$ for $k\ge 3$, so Theorem~\ref{thm:pthpowers} gives another proof of the conjecture of Caro and Yuster (the precise extremal graph in the case $k=2$ is not known for all $n$, but the known bounds~\cite{Nik2007} give the same asymptotic result for $t_p(n, C_4)$). 

As mentioned in the Introduction, Theorem~\ref{thm:kkn-kgenthm} is useful because there are many Tur\'an-type problems where the spectral extremal graph is one of $K_{k, n-k}$, $K_k \vee \overline{K_{n-k}}$ or $K_k \vee (\overline{K_{n-k-2}} \cup K_2)$. Byrne, Desai and Tait~\cite{BDT} proved a general spectral extremal theorem which captures many of the known forbidden subgraph problems where one of those graphs are the spectral extremal result for the Tur\'an-type problem. We list a few such Tur\'an-type results in the $\ell_2$-norm; more can be deduced from the results in the paper of Byrne, Desai and Tait. 

\begin{proposition}[Paths]\label{prop:paths}
For any $k\ge 1$, we have
\[\exco(n, P_{2k+2}) = kn^2(1+o(1))\]
and
\[\exco(n, P_{2k+3}) = kn^2(1+o(1)).\]
\end{proposition}

Nikiforov~\cite{Nik2010} proved for $k \ge 1$, the $P_{2k+2}$-free graph with maximum spectral radius for large $n$ is $K_{k} \vee \overline{K_{n-k}}$; similarly, the $P_{2k+3}$-free graph with maximum spectral radius is $K_k\vee (\overline{K_{n-k-2}} \cup K_2)$. 

\begin{proposition}[Disjoint cycles]\label{prop:disjointcycles}
Let $k\ge 2$ and let $\sF$ be the set of all disjoint unions of $k$ cycles. Then, 
\[\exco(n, \sF) = (2k-1)n^2(1+o(1)).\]
\end{proposition}

 Erd\H{o}s and P\'osa~\cite{ErP} proved that the $\sF$-free graph with maximum number of edges is $K_{2k-1}\vee \overline{K_{n-2k+1}}$. Recently, Liu and Zhai~\cite{LZ22} proved that $K_{2k-1}\vee \overline{K_{n-2k+1}}$ is also the $\sF$-graph with maximum spectral radius. 

\begin{proposition}[$K_r$-minor-free graphs]\label{prop:krminorfree}
For a given $k\ge 3$, let $\sF$ be the set of graphs with a $K_k$-minor. Then,
\[\exco(n, \sF)=(k-2)n^2(1+o(1)).\]
\end{proposition}

Tait~\cite{Tait2019} proved that $K_{k-2}\vee \overline{K_{n-k+2}}$ is the $K_k$-minor-free graph with maximum spectral radius for large $n$. 

Finally, we mention that other asymptotic Tur\'an-type results in the $\ell_2$-norm can be deduced from known spectral Tur\'an theorems and Hofmeister's inequality. 

\begin{proposition}[Outerplanar and planar graphs]\label{prop:colindeverdiere}
Let $\sF$ be the family of all graphs which contain a $K_4$-minor or a $K_{2, 3}$-minor (so that the family of $\sF$-free graphs is the family of outerplanar graphs). Then, 
\[\exco(n, \sF) = n^2(1+o(1)).\]
Similarly, let $\mathscr{G}$ be the family of all graphs which contain a $K_5$-minor or a $K_{3, 3}$-minor (so that the family of $\mathscr{G}$-free graphs is the family of planar graphs). Then, 
\[\exco(n, \mathscr{G}) = 2n^2(1+o(1)).\]
\end{proposition}

  Tait and Tobin~\cite{TT2017} showed that the graph $K_1 \vee P_{n-1}$ is the outerplanar graph with maximum spectral radius for large $n$; furthermore, the graph $K_2 \vee P_{n-2}$ is the planar graph with maximum spectral radius. These results were generalized by Tait~\cite{Tait2019}, who showed that $K_{r-1}\vee P_{n-r+1}$ is the graph on $n$ vertices for large $n$ with maximum spectral radius and Colin de Verdi\'ere number at most $r$ (the graphs with Colin de Verdi\'ere number at most $2$ are the outerplanar graphs, and the graphs with Colin de Verdi\'ere number at most $3$ are the planar graphs).

\begin{proposition}[$K_{s, t}$-minor-free graphs]\label{prop:kstminorfree}
Let $t\ge s\ge 2$, and let $\sF$ be the family of all graphs which contain a $K_{s, t}$-minor (so that the family of $\sF$-free graphs is the family of $K_{s, t}$-minor-free graphs). Then,
\[\exco(n, \sF) = (s-1)n^2(1+o(1)).\]
\end{proposition}

Tait~\cite{Tait2019} proved that for large $n$, any $K_{s, t}$-minor-free graph on $n$ vertices satisfies
\[\lambda_1 \le \frac{s+t-3+\sqrt{(s+t-3)^2+4((s-1)(n-s+1)-(s-2)(t-1))}}{2}.\]

The precise $K_{s, t}$-minor-free graphs with maximum spectral radius were subsequently determined for all $s$ and $t$ by Zhai and Lin~\cite{ZL2022}. The graph $K_{s-1}\vee (\frac{n-s+1}{t}K_t)$ gives the asymptotically tight lower bound in Proposition~\ref{prop:kstminorfree}.

\end{document}